\newcommand{\R}{\setsymbol{R}}%
\newtheorem{definition}{Definition}[section]
\newtheorem{lemma}[definition]{Lemma}
\newtheorem{theorem}[definition]{Theorem}
\newtheorem{proposition}[definition]{Proposition}
\newtheorem{corollary}[definition]{Corollary}
\newtheorem{conjecture}[definition]{Conjecture}
\newtheorem{remark}[definition]{Remark}
\DeclareMathOperator{\Ric}{Ric}
\definecolor{darkblue}{rgb}{0.31, 0.12, 0.59}
\definecolor{darkorange}{rgb}{0.78, 0.31, 0.08}
\def\Ric{\operatorname{Ric}}
\def\R{\mathbb{R}}
\def\C{\mathbb{C}}
\def\tr{\operatorname{tr}}
\def\div{\operatorname{div}}
\def\m{\mathfrak m}
\author{Sven Hirsch}
\address{Columbia University, 2990 Broadway, New York NY 10027, USA}
\email{sven.hirsch@columbia.edu }
\title{Hawking Mass Monotonicity for Initial Data Sets}
\begin{document}
 
 \maketitle

 \begin{abstract}
We introduce new systems of PDE on initial data sets $(M,g,k)$ whose solutions model double-null foliations.
This allows us to generalize Geroch's monotonicity formula for the Hawking mass under inverse mean curvature flow to initial data sets satisfying the dominant energy condition. 
We study the existence theory of these systems and give geometric applications.
 \end{abstract}

 \section{Introduction}

 In the pioneering work \cite{HuiskenIlmanen} G.~Huisken and T.~Ilmanen proved the Riemannian Penrose inequality for connected horizons.
 Their proof relies on the monotonicity of the Hawking mass under inverse mean curvature flow (IMCF) which has been first discovered by R.~Geroch, P.~Jang and R.~Wald in \cite{Geroch, JangWald}.
 Due to IMCF's crucial role, it has been of great interest to generalize IMCF to the spacetime ($k\ne0$) setting.
 However, so far all approaches either lacked an existence theory \cite{BrayKhuri, BHMS, Frauendiener} or a monotonicity formula \cite{Moore, HuiskenWolf}, and more than half a century after Penrose's seminal paper \cite{Penrose}, the general Penrose conjecture still remains wide open.
 In this paper we propose a new method to generalize IMCF which is the first instance where there are both monotonicity and partial existence results, cf. Theorem \ref{T:PDE}.

\begin{theorem}\label{Thm Intro}
Consider a smooth initial data set $(M^3,g,k)$ with boundary $\partial M=\partial_+M\cup\partial_-M$.
Let $(u,v)$ be a solution to spacetime IMCF with appropriate\footnote{See Section \ref{S:Penrose} for a precise statement and definitions.} boundary and regularity conditions. Then
    \begin{align}\label{eq:cor main monotonicity}
  m_H(\partial_+M)-m_H(\partial_-M)
  \ge&\int_M( \mu(|\nabla u|+|\nabla v|)+\langle J,\nabla u-\nabla v\rangle)d\mu
\end{align}
where
\begin{align*}
    m_H(\Sigma)=\sqrt{\frac{|\Sigma|}{16\pi}}\left(1 - \frac1{16\pi}\int_{\Sigma}\theta_+\theta_-dA\right)
\end{align*}
is the spacetime Hawking mass, $\theta_\pm=H\pm \tr_\Sigma (k)$ are the future and past null expansions, and
 \begin{align*}
     \mu=&\frac12(R+(\tr_g(k))^2-|k|^2),\quad\quad\quad J=\div(k-\tr_g(k)g)
 \end{align*}
 are the energy, and momentum densities.
\end{theorem}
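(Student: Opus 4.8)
The plan is to mimic Geroch's classical computation, but working with a double-null structure rather than a single inverse mean curvature flow. The key conceptual point is that on an initial data set, the Hawking mass sees both null expansions $\theta_\pm = H \pm \tr_\Sigma k$, so one needs two potential functions $u$ and $v$ whose level sets play the role of the two families of screens in a double-null foliation. I would start by recalling the precise definition of "spacetime IMCF" from Section~\ref{S:Penrose}: presumably the pair $(u,v)$ satisfies a coupled system in which the level sets of $u$ flow with speed governed by $1/\theta_+$ and the level sets of $v$ with speed governed by $1/\theta_-$ (or some symmetric combination), together with an additional equation relating the two that encodes the "double-null" compatibility. The boundary conditions should force the level sets $\{u = 0\}$ and $\{v=0\}$ to coincide with $\partial_- M$ (and similarly at $\partial_+ M$), with $\theta_\pm$ having appropriate signs there so that $m_H(\partial_\pm M)$ is the relevant quantity.

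**Main steps:**

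First I would express $\int_\Sigma \theta_+\theta_- \, dA$ along the flow and differentiate. Writing $\Sigma_t$ for a level set, the derivative $\frac{d}{dt}\left(\sqrt{|\Sigma_t|/16\pi}\,(1 - \tfrac{1}{16\pi}\int_{\Sigma_t}\theta_+\theta_-\,dA)\right)$ breaks into a term from the change of area (which produces the $\sqrt{|\Sigma|/16\pi}$ prefactor times $\int \theta_+\theta_-$) and a term from the change of $\int\theta_+\theta_-\,dA$ itself. For the latter, I would use the evolution equations for $H$ and for $\tr_\Sigma k$ under a normal variation with the prescribed speeds; the evolution of $H$ brings in $|\mathring{A}|^2$, $\Ric(\nu,\nu)$, and a Laplacian term $\Delta(\text{speed})$, exactly as in Geroch, while the evolution of $\tr_\Sigma k$ brings in derivatives of $k$ and curvature terms. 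Second, I would integrate by parts on $\Sigma_t$ to dispose of the Laplacian-type terms and combine the pointwise curvature terms: the Gauss equation on $\Sigma_t \subset M$ converts $\Ric(\nu,\nu)$ and the intrinsic scalar curvature into $R$ via $2\,\Ric(\nu,\nu) = R - R_\Sigma + |A|^2 - H^2$, and the Gauss--Bonnet theorem $\int_{\Sigma_t} R_\Sigma = 8\pi$ (for topological spheres; one needs a Hawking-type topology argument here) produces the constant that makes the inequality sharp. Third, and this is where the $k$-dependent terms must be handled carefully, I would group the remaining terms involving $k$ and its divergence so that they reassemble into $\mu$ and $J$; this is the step where the precise form of the coupling between $u$ and $v$ is essential — the choice of flow speeds $1/\theta_+$ and $1/\theta_-$ should be exactly what makes the cross terms telescope into $\mu(|\nabla u| + |\nabla v|) + \langle J, \nabla u - \nabla v\rangle$ after using the dominant energy condition to control signs. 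Finally, I would integrate over $t$ from $\partial_- M$ to $\partial_+ M$ and use the coarea formula, $dt \, dA = |\nabla u|^{-1} d\mu$ (resp. with $v$), to convert the time integral of the bulk terms into the claimed integral over $M$ against $d\mu$.

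**The main obstacle:**

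The hardest part will be the algebra of Step 3: showing that all the terms arising from the evolution of $\tr_\Sigma k$ — including the divergence terms, the terms quadratic in $k$, and the terms mixing $k$ with the second fundamental form $A$ of $\Sigma_t$ — combine cleanly into the energy and momentum densities $\mu$ and $J$ with the correct signs, using only the constraint equations and the dominant energy condition $\mu \ge |J|$. This requires the double-null system to be set up with precisely the right structure; a naive choice of two decoupled flows will not work, because the cross terms between the $u$-variation of $\theta_-$ and the $v$-variation of $\theta_+$ need to cancel or recombine. I would also need to be careful about the regularity and the handling of jump sets if the flow is defined weakly à la Huisken--Ilmanen (the "appropriate regularity conditions" in the hypothesis), and about the topology hypothesis needed to invoke Gauss--Bonnet with the right constant, possibly via an exhaustion or a connectedness assumption on the level sets inherited from the boundary data. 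A secondary subtlety is the positivity of $\theta_\pm$ (or at least their non-vanishing) along the flow, which is needed both for the flow speeds to make sense and for the monotonicity direction; this should follow from a maximum-principle argument on the coupled system, referenced from the existence theory in Section~\ref{S:Penrose}.
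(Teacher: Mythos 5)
Your proposal is a Geroch-style parabolic argument: pick a flow parameter $t$, differentiate $m_H(\Sigma_t)$ along the flow using the evolution equations for $H$ and $\tr_\Sigma k$, and convert to a bulk integral by coarea. This is not the paper's route, and it contains a genuine gap at its core. The functions $u$ and $v$ have, in general, \emph{different} level sets in the interior of $M$; they only share level sets at $\partial_\pm M$ (this is forced by the boundary conditions). Consequently there is no single evolving family $\Sigma_t$ on which both $\theta_+$ and $\theta_-$ live, and the quantity $\int_{\Sigma_t}\theta_+\theta_-\,dA$ that you propose to differentiate is not defined along "the flow". The paper is explicit that neither foliation $\Sigma_u$ nor $\Sigma_v$ individually carries a monotone quantity (see the discussion around Figure 2), which is exactly why the result is formulated as a divergence identity rather than as a monotonicity formula: Theorem \ref{T:main} gives a pointwise identity $\div Y = (\text{non-negative terms}) + 2\mu(|\nabla u|+|\nabla v|) + 2\langle J,\nabla u-\nabla v\rangle - 2K_u|\nabla u| - 2K_v|\nabla v|$, and Theorem \ref{Thm Intro} follows by integrating this over $M$ (Corollary \ref{Cor: main}), evaluating the boundary flux $Y_\nu$ on $\partial_\pm M$ where $\nu_u=\nu_v$, and disposing of the Gaussian curvature terms by applying the coarea formula and Gauss--Bonnet \emph{to the two foliations separately}. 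Your plan of "telescoping the cross terms between the $u$-variation of $\theta_-$ and the $v$-variation of $\theta_+$" cannot be executed as a flow computation because those two variations move the surface in two different ways.

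Several ingredients of your proposal do survive in the actual proof, just relocated. The identification of the flow speeds with $1/\theta_\pm$ appears as the boundary relations $|\nabla v|=\tfrac14\theta_+(u+v)$ and $|\nabla u|=\tfrac14\theta_-(u+v)$, which is what converts the flux of $Y$ into $\int_{\partial_\pm M}\theta_+\theta_-\,dA$; the prefactor $\sqrt{|\Sigma|/16\pi}$ arises from assuming $(u+v)|_{\partial_\pm M}$ is proportional to $\sqrt{|\partial_\pm M|}$; Gauss--Bonnet and coarea enter exactly where you expect, though applied twice; and the cancellation of the bad $\tr_g(k)$ cross terms is indeed the crux, but it is achieved algebraically in the proof of Theorem \ref{T:main} (via Stern's identity applied to $u$ and to $v$ separately, with the terms $A$ and $B$ appearing with opposite signs in the two halves), not via a maximum principle or an evolution equation. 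To repair your argument you would need to abandon the single-flow framework and work with the interior divergence identity, or restrict to the special situation (e.g.\ spherical symmetry, Lemma \ref{L:Penrose}) where the two foliations coincide and a genuine monotonicity statement is available.
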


If the dominant energy condition (DEC) $\mu\ge|J|$ holds, the RHS of equation \eqref{eq:cor main monotonicity} is non-negative.
Equation \eqref{eq:cor main monotonicity} follows from a more general divergence identity which applies to a one-parameter family of PDE systems.
These systems simultaneously generalize IMCF, spacetime harmonic, and $p$-harmonic functions with their corresponding monotonicity formulas.
We are able to solve some of these systems in full generality, and the IMCF system in spherical symmetry.
For any slice $(M^3,g,k)$ in the Minkowski spacetime the solution $(u,v)$ to any of these systems is given by the restricting the null-functions $u=r+t$ and $v=r-t$ to $(M^3,g,k)$.

\subsection{Statement of results}

Our work relies on the following divergence identity:

 \begin{theorem}\label{T:main}
Let $a\in[0,1]$ and suppose $u,v\in C^{2,\alpha}(M)$ are positive solutions of the system
\begin{align}\label{system}
\begin{split}
\overline\Delta_+u=&a(\overline\nabla^2_+)_{\nu_u\nu_u}u\\
\overline\Delta_-v=&a(\overline\nabla^2_-)_{\nu_v\nu_v}v
\end{split}
\end{align}
with $|\nabla u|,|\nabla v|\ne0$. Here $\nu_u=\frac{\nabla u}{|\nabla u|}$ is the unit normal to the level-sets of $u$, $\overline\Delta_\pm=\tr_g\overline\nabla^2_\pm$ and 
\begin{align*}
    (\overline\nabla^2_+)_{ij}u=\nabla_{ij}u+k_{ij}|\nabla u|-\frac{|\nabla u||\nabla v|+\langle \nabla u,\nabla v\rangle}{u+v}g_{ij}+\frac{\nabla_iu\nabla_jv+\nabla_ju\nabla_iv}{u+v},\\
   (\overline\nabla^2_-)_{ij}v=\nabla_{ij}v-k_{ij}|\nabla v|-\frac{|\nabla u||\nabla v|+\langle \nabla u,\nabla v\rangle}{u+v}g_{ij}+\frac{\nabla_iu\nabla_jv+\nabla_ju\nabla_iv}{u+v}.
\end{align*}
Then
\begin{align}\label{integral formula}
\begin{split}
\div Y=&\frac{|\overline\nabla_+^2u|^2-(a(\overline\nabla_+^2)_{\nu_u\nu_u}u)^2}{|\nabla u|}+\frac{|\overline\nabla_+^2v|^2-(a(\overline\nabla_-^2)_{\nu_v\nu_v}v)^2}{|\nabla v|}\\
&+2\mu(|\nabla u|+|\nabla v|)+2\langle J,\nabla u-\nabla v\rangle\\
&-2K_u|\nabla u|-2K_v|\nabla v|
\end{split}
\end{align}
where $K_u,K_v$ are the Gaussian curvatures of the level sets of $u,v$, and
\begin{align*}
Y=&2\nabla (|\nabla u|+|\nabla v|)+2k(\nabla (u-v),\cdot)+4(|\nabla u|\nabla v+|\nabla v|\nabla u)\frac1{u+v}\\
&-2\Delta u\frac{\nabla u}{|\nabla u|}-2\Delta v\frac{\nabla v}{|\nabla v|}-2\tr_g(k)\nabla (u-v).
\end{align*}
\end{theorem}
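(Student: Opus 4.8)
The plan is to verify \eqref{integral formula} by a direct pointwise computation, carried out where $u,v\in C^3$ (the general $C^{2,\alpha}$ case following by approximation). Since $|\nabla u|,|\nabla v|\neq0$ and $u,v>0$, near each point the level sets $\Sigma_u=\{u=\mathrm{const}\}$ and $\Sigma_v=\{v=\mathrm{const}\}$ are smooth surfaces and $u+v>0$, so I can expand $\div Y$ term by term and reorganize everything relative to $\Sigma_u$ and $\Sigma_v$. The toolkit for $u$ (and symmetrically for $v$), writing $\nu=\nu_u$ and letting $A_u,H_u$ be the second fundamental form and mean curvature of $\Sigma_u$ with respect to $\nu$, consists of: $\div\nu_u=H_u$, $\Delta u=|\nabla u|H_u+\nabla^2_{\nu\nu}u$, $\nabla^2u(\nu,e)=e(|\nabla u|)$ for $e$ tangent to $\Sigma_u$, the orthogonal decomposition $|\nabla^2u|^2=|\nabla u|^2|A_u|^2+2\big|\nabla^{\Sigma_u}|\nabla u|\big|^2+(\nabla^2_{\nu\nu}u)^2$, the Gauss equation $2K_u=\scal-2\Ric(\nu_u,\nu_u)+H_u^2-|A_u|^2$, and the Bochner formula $\Delta|\nabla u|=|\nabla u|^{-1}\big(|\nabla^2u|^2-|\nabla|\nabla u||^2+\langle\nabla u,\nabla\Delta u\rangle+\Ric(\nabla u,\nabla u)\big)$.

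I would first treat the \emph{pure $u$} part of $Y$, namely $2\nabla|\nabla u|-2\Delta u\,\nu_u$. Computing $\div$ of it using Bochner together with $\div(\Delta u\,\nu_u)=\langle\nabla\Delta u,\nu_u\rangle+\Delta u\,H_u$, the third-order contributions $\langle\nabla u,\nabla\Delta u\rangle$ cancel, and rewriting $\Ric(\nabla u,\nabla u)=|\nabla u|^2\Ric(\nu_u,\nu_u)$ via the Gauss equation yields $|\nabla u|\scal-2K_u|\nabla u|$ plus terms in $|A_u|^2$, $H_u^2$, $\nabla^2_{\nu\nu}u\cdot H_u$ and $\big|\nabla^{\Sigma_u}|\nabla u|\big|^2$. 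Here $-2K_u|\nabla u|$ is already in final form and $|\nabla u|\scal$ is the scalar-curvature part of the desired $2\mu|\nabla u|$, while the remaining terms are (after the substitutions below) the part of $|\nabla u|^{-1}\big(|\overline\nabla^2_+u|^2-(a(\overline\nabla^2_+)_{\nu_u\nu_u}u)^2\big)$ that survives when $k=0$ and the double-null corrections are dropped. The analogous statements hold for $v$.

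Next I would handle the $k$-dependent part of $Y$, that is $2k(\nabla(u-v),\cdot)-2\tr_g(k)\nabla(u-v)$, using $J=\div(k-\tr_g(k)g)$ directly to get
\begin{align*}
\div\big(2k(\nabla w,\cdot)-2\tr_g(k)\nabla w\big)=2\langle J,\nabla w\rangle+2\langle k,\nabla^2w\rangle-2\tr_g(k)\Delta w,\qquad w=u-v,
\end{align*}
which produces the term $2\langle J,\nabla u-\nabla v\rangle$ of \eqref{integral formula} together with $2\langle k,\nabla^2u\rangle-2\langle k,\nabla^2v\rangle-2\tr_g(k)\Delta u+2\tr_g(k)\Delta v$. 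Since expanding $|\overline\nabla^2_+u|^2$ contributes $|\nabla u|^2|k|^2$ and $2|\nabla u|\langle k,\nabla^2u\rangle$ (plus contractions of $k$ with the $g$- and $\nabla u\,\nabla v$-parts of $\overline\nabla^2_+u$), dividing by $|\nabla u|$ and substituting the trace of the system, $\overline\Delta_+u=\Delta u+|\nabla u|\tr_g(k)-(u+v)^{-1}\big(3|\nabla u||\nabla v|+\langle\nabla u,\nabla v\rangle\big)=a(\overline\nabla^2_+)_{\nu_u\nu_u}u$, into the $-2\tr_g(k)\Delta u$ term, I expect the quadratic $k$-expressions to combine with $|\nabla u|\scal$ into exactly $2\mu|\nabla u|=|\nabla u|\scal+|\nabla u|\big((\tr_g k)^2-|k|^2\big)$, and likewise for $v$.

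Finally I would expand $4\div\big((|\nabla u|\nabla v+|\nabla v|\nabla u)(u+v)^{-1}\big)$ by the product and quotient rules; the resulting terms, carrying $(u+v)^{-1}$ and $(u+v)^{-2}$ factors against $\nabla(u+v)$, $\nabla^2(u\pm v)$, $\nabla|\nabla u|$ and $\nabla|\nabla v|$, are exactly those needed to complete the remaining pieces of $|\overline\nabla^2_\pm|^2$ once $\nabla^2u,\Delta u$ (and their $v$-counterparts) are replaced through $\overline\nabla^2_\pm$ and the system. What makes this recombination transparent is that, relative to $\Sigma_u$, the system reads $\tr_{\Sigma_u}(\overline\nabla^2_+u)^{\Sigma_u}=(a-1)(\overline\nabla^2_+)_{\nu_u\nu_u}u$, so that
\begin{align*}
|\overline\nabla^2_+u|^2-\big(a(\overline\nabla^2_+)_{\nu_u\nu_u}u\big)^2=\big|(\overline\nabla^2_+u)^{\Sigma_u}\big|^2+2\big|(\overline\nabla^2_+u)(\nu_u,\cdot)^{\Sigma_u}\big|^2+(1-a^2)\big((\overline\nabla^2_+)_{\nu_u\nu_u}u\big)^2
\end{align*}
is manifestly the nonnegative, Kato-improved combination that the Bochner argument delivers. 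The hard part will be purely organizational: keeping track of the many mixed terms involving both $\nabla u$ and $\nabla v$, the various $(u+v)^{-1},(u+v)^{-2}$ factors, and the mixed $k$ and $\nabla^2$ contractions, and checking that each lands in its designated slot. In particular, arranging that $a$ enters only through the single subtraction $\big(a(\overline\nabla^2_\pm)_{\nu\nu}\big)^2$, without leaking into the $\mu$, $J$, or $K$ terms, is the delicate point and is what forces the precise coefficients in the definitions of $\overline\nabla^2_\pm$ and of $Y$.
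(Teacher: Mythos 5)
Your strategy coincides with the paper's own: expand $|\overline\nabla^2_\pm|^2$ and the squared trace, substitute the PDE, compute the divergence of the coupling term $4(|\nabla u|\nabla v+|\nabla v|\nabla u)/(u+v)$ separately, handle the momentum term via $\div(k(\nabla w,\cdot)-\tr_g(k)\nabla w)=\langle J,\nabla w\rangle+\langle k,\nabla^2w\rangle-\tr_g(k)\Delta w$, and absorb the remaining pure second-order piece through the identity
\begin{align*}
2\div\left(\nabla|\nabla u|-\Delta u\tfrac{\nabla u}{|\nabla u|}\right)=\tfrac1{|\nabla u|}\bigl(|\nabla^2u|^2+|\nabla u|^2(R-2K_u)-(\Delta u)^2\bigr),
\end{align*}
which the paper simply cites as Stern's identity and you propose to re-derive from Bochner plus the Gauss equation. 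So the route is the same; the one genuine problem is a false intermediate claim in your bookkeeping.

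Namely, you assert that for $u$ alone the quadratic $k$-expressions ``combine with $|\nabla u|R$ into exactly $2\mu|\nabla u|$ \dots and likewise for $v$.'' They do not. After expanding $|\overline\nabla^2_+u|^2$ and substituting the trace of the system into $-2\tr_g(k)\Delta u$, the $u$-half leaves behind, in addition to $((\tr_g k)^2-|k|^2)|\nabla u|$, the uncancelled cross terms
\begin{align*}
-A+B,\qquad A=2k_{ij}\Bigl(\tfrac{|\nabla u||\nabla v|+\langle\nabla u,\nabla v\rangle}{u+v}g_{ij}-\tfrac{\nabla_iu\nabla_jv+\nabla_ju\nabla_iv}{u+v}\Bigr),\qquad B=2\tr_g(k)\,\tfrac{3|\nabla u||\nabla v|+\langle\nabla u,\nabla v\rangle}{u+v},
\end{align*}
which belong neither to $2\mu|\nabla u|$ nor to the Hessian-squared slot. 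They reappear with opposite signs, as $+A-B$, in the $v$-half, and cancel only when the two halves are added; this cross-cancellation is precisely why the two equations must be coupled through the same quantity $(3|\nabla u||\nabla v|+\langle\nabla u,\nabla v\rangle)/(u+v)$, cf.\ Section \ref{SS:analytic}. A write-up that processes $u$ and $v$ independently and checks that each term ``lands in its designated slot'' will therefore not close; you must carry $A$ and $B$ along explicitly and exhibit their cancellation in the sum. With that correction the rest of your outline, including the orthogonal decomposition of $|\overline\nabla^2_+u|^2-(a(\overline\nabla^2_+)_{\nu_u\nu_u}u)^2$ used to see non-negativity of the first line, goes through.
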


The first line of the RHS of equation \eqref{integral formula} is always non-negative, the second line \eqref{integral formula} is non-negative in case the DEC is satisfied, and the third line can be controlled via Gauss-Bonnet's theorem upon integration.

Formula \eqref{integral formula} generalizes many important results:

 In case $k=0$, we can set $u=v$ and system \eqref{system} decouples and the PDE system simplifies to 
 \begin{align}\label{PDE k=0}
     \Delta u=a\nabla_{\nu_u\nu_u}u+2\frac{|\nabla u|^2}u.
 \end{align}
 For $a=1$, the function $u$ solving \eqref{PDE k=0} is rescaled IMCF and integrating the divergence identity \eqref{integral formula} yields the Hawking mass monotonicity formula.
 For $k=0$ and $0\le a<1$, $u$ is a rescaled $p$-harmonic function.
The corresponding divergence identity \eqref{integral formula} has been first discovered by V.~Agostiniani, L.~Mazzieri and F.~Oronzio in \cite{AMO}.
This led to a new proof of the Riemannian positive mass theorem (PMT), and, together with C.~Mantegazza, the Riemannian Penrose inequality \cite{AMMO}.
However, even in the special case $k=0$, the above formula has some new content since we can prescribe different boundary conditions for $u$ and $v$, such that the system does not decouple.

Another special case is given by $v=0$.
Then $u$ is a spacetime harmonic function, i.e. $u$ solves the PDE $\Delta u=-\tr_g(k)|\nabla u|$, and integrating equation \eqref{integral formula} recovers the main formula of \cite[Proposition 3.2]{HKK} which led to a proof of the spacetime PMT by the author, D.~Kazaras, and M.~Khuri.

Moreover, in spherical symmetry, equation \eqref{integral formula} recovers the monotonicity formula of the spacetime Hawking mass \cite{Hayard} which implies the Penrose inequality in this setting.


The motivation for the PDE systems \eqref{system} is three-fold. 
First, they are geometrically natural and obtained by studying the null functions $u=r+t$ and $v=r-t$ in Minkowski space.
Second, they have a simple analytic structure. 
In particular there are no second-order coupling terms enabling us to show existence for $a=0$ in full generality.
Third, the use of two functions instead of one, mimics the use of two-component spinors, cf. \cite{Witten, HirschZhang2}.

\begin{theorem}\label{T:Minkowski}
Consider the functions $u=r+t$ and $v=r-t$ in Minkowski space $\R^{3,1}$.
Then the restrictions of $u,v$ to any initial data set $(M,g,k)\subset\R^{3,1}$ solve system \eqref{system} for ever $a\in[0,1]$.
Moreover, we have $(\overline\nabla^2_+)_{ij}u=0$ and $(\overline \nabla^2_-)_{ij}v=0$.
\end{theorem}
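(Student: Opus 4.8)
The plan is to compute directly. We work in $\R^{3,1}$ with coordinates $(x^1,x^2,x^3,t)$, Minkowski metric $\eta$, and set $r=\sqrt{(x^1)^2+(x^2)^2+(x^3)^2}$, $u=r+t$, $v=r-t$. The initial data set $(M,g,k)\subset\R^{3,1}$ is a spacelike hypersurface with induced metric $g$ and second fundamental form $k$ (with respect to the future unit timelike normal). Restricting $u$ and $v$ to $M$, I must show that the two tensors $(\overline\nabla^2_+)_{ij}u$ and $(\overline\nabla^2_-)_{ij}v$ defined in Theorem \ref{T:main} vanish identically on $M$; once this is established, the PDE system \eqref{system} holds trivially for every $a\in[0,1]$ since both sides are zero (taking traces of the vanishing tensors).

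**The key computation in the ambient space.** First I would record the ambient Hessians of $u$ and $v$ in $\R^{3,1}$. Writing $\rho=r$ as a function on spacetime, a direct calculation gives $\o\nabla^2 r = \frac{1}{r}\big(\eta_{\text{spatial}} - dr\otimes dr\big)$ on the spatial slices, extended by zero in the $t$-directions, while $\o\nabla^2 t=0$. Hence $\o\nabla^2 u=\o\nabla^2 v=\o\nabla^2 r$. The crucial identities to extract are: $\o\nabla u$ is the null vector $\partial_r+\partial_t$, $\o\nabla v=\partial_r-\partial_t$, so $\langle\o\nabla u,\o\nabla u\rangle_\eta=\langle\o\nabla v,\o\nabla v\rangle_\eta=0$ and $\langle\o\nabla u,\o\nabla v\rangle_\eta=2$; moreover $u+v=2r=\langle\o\nabla u,\o\nabla v\rangle_\eta\cdot r$, and one checks $\o\nabla^2 r=\frac{1}{r}\big(\eta - \tfrac12(du\otimes dv+dv\otimes du)\big)$ as a $(0,2)$-tensor on spacetime. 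This is the ambient origin of the correction terms appearing in $(\overline\nabla^2_\pm)_{ij}$.

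**Descending to $M$ via the Gauss formula.** Next I would use the Gauss–Weingarten relations to project onto $M$. Let $\mathbf{n}$ be the future unit normal to $M$. For a function $f$ on spacetime with restriction $\bar f=f|_M$, the tangential Hessian satisfies $\nabla^2_{ij}\bar f = (\o\nabla^2 f)_{ij} + (\o\nabla_{\mathbf n} f)\,k_{ij}$ — here the sign of $k$ and the direction of $\mathbf n$ must be tracked carefully to match the paper's convention $\theta_\pm=H\pm\tr_\Sigma k$. Likewise the tangential gradient is $\nabla\bar f=(\o\nabla f)^\top$, and $|\nabla\bar f|^2=\langle\o\nabla f,\o\nabla f\rangle_\eta - \langle\o\nabla f,\mathbf n\rangle^2$. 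Plugging the Minkowski identities from the previous step into $(\overline\nabla^2_+)_{ij}u=\nabla_{ij}u+k_{ij}|\nabla u| - \frac{|\nabla u||\nabla v|+\langle\nabla u,\nabla v\rangle}{u+v}g_{ij}+\frac{\nabla_iu\nabla_jv+\nabla_ju\nabla_iv}{u+v}$, the normal-component term $\o\nabla_{\mathbf n}u$ should combine with $|\nabla u|$ to produce exactly $+k_{ij}|\nabla u|$ or cancel it, depending on sign of $\langle\o\nabla u,\mathbf n\rangle$; the point is that since $\o\nabla u$ is null, $|\nabla u|=|\langle\o\nabla u,\mathbf n\rangle|$, so $\o\nabla_{\mathbf n}u = \pm|\nabla u|$ and the $k$-terms are forced to match. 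The remaining terms reduce, using $u+v=2r$, $\langle\o\nabla u,\o\nabla v\rangle_\eta=2$, and the splitting of $g_{ij}$ as $\eta_{ij}$ minus the normal part, to an identity that cancels the ambient $\tfrac1r(\eta-\ldots)$ against the $-\frac{\cdots}{u+v}g_{ij}+\frac{\cdots}{u+v}$ correction terms. The same computation with $u\leftrightarrow v$ and the opposite sign on the $k$-term handles $(\overline\nabla^2_-)_{ij}v=0$.

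**Expected main obstacle.** The calculation itself is routine once organized, so the real difficulty is purely bookkeeping: getting every sign right — the orientation of $\mathbf n$, the sign of $k$ in the Gauss formula versus the sign convention implicit in $\theta_\pm=H\pm\tr_\Sigma k$, and the sign of $\langle\o\nabla u,\mathbf n\rangle$ (which may differ on different components of $M$ but whose square is all that enters through $|\nabla u|$). I would organize the proof by first verifying the identities $\langle\o\nabla u,\mathbf n\rangle=-|\nabla u|$, $\langle\o\nabla v,\mathbf n\rangle=+|\nabla v|$ (or whatever the correct signs turn out to be for a future-pointing, outward-evolving foliation), then substituting mechanically. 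A secondary check is that the definition of $k$ in the paper (via $\mu$, $J$ as the constraint quantities) is the one making the Gauss equation come out with the stated sign; this should be consistent with the $\theta_\pm$ formula. Once the signs are pinned down, both tensors vanish termwise and the theorem follows, with the PDE system \eqref{system} holding for all $a$ because $\overline\Delta_\pm u=\tr_g(\overline\nabla^2_\pm u)=0=a(\overline\nabla^2_\pm)_{\nu\nu}u$.
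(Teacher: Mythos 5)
Your proposal is correct and follows essentially the same route as the paper: establish the ambient identity $\hat\nabla^2 r=\frac1r\bigl(\hat g-\tfrac12(du\otimes dv+dv\otimes du)\bigr)$ (which the paper packages as $\hat\nabla^2(uv)=2\hat g$, $\hat\nabla^2(v-u)=0$), restrict to $M$ via the Gauss formula using that $\hat\nabla u,\hat\nabla v$ are null so $N(u)=\pm|\nabla u|$, and decompose $\hat g(\hat\nabla u,\hat\nabla v)=2$ into $\langle\nabla u,\nabla v\rangle+|\nabla u||\nabla v|$. The only point to tighten is your remark that only the square of $\langle\hat\nabla u,N\rangle$ enters: it enters linearly in the $k$-term, and one must check (as the signs indeed work out for future-pointing $N$) that $N(u)=+|\nabla u|$ and $N(v)=-|\nabla v|$, matching the $\pm k$ signs in $\overline\nabla^2_\pm$.
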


\begin{theorem}\label{T:PDE}
Let $(M,g,k)$ be a smooth initial data set.
Suppose that the boundary of $M$ has two connected components $\partial_-M$ and $\partial_+M$.
Then we can solve system \eqref{system} for $a=0$, i.e. for any positive constants $c_-,c_+,d_-,d_+$, there exist positive functions $u,v\in C^{2,\alpha}(M)$ solving
\begin{align*}
\Delta u=&-\tr_g(k)|\nabla u|+\frac{3|\nabla u||\nabla v|+\langle \nabla u,\nabla v\rangle}{u+v},\\
\Delta v=&\tr_g(k)|\nabla v|+\frac{3|\nabla u||\nabla v|+\langle \nabla u,\nabla v\rangle}{u+v}
\end{align*}
on $M$, with Dirichlet boundary data $u=c_\pm$, $v=d_\pm$ on $\partial_\pm M$.
\end{theorem}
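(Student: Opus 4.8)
The plan is to produce the solution as a fixed point of a compact operator and to supply the necessary a priori estimates by exploiting two structural features of the $a=0$ system: its principal part is the decoupled, uniformly elliptic Laplacian $(\Delta,\Delta)$, and all coupling terms are of order $\le 1$. Fix $0<\beta<\alpha$. For $(\bar u,\bar v)\in C^{1,\beta}(M)^2$ let $T(\bar u,\bar v)=(u,v)$ solve the two decoupled linear Dirichlet problems obtained by freezing every term of order $\le 1$: $\Delta u=-\tr_g(k)|\nabla\bar u|+\frac{3|\nabla\bar u||\nabla\bar v|+\langle\nabla\bar u,\nabla\bar v\rangle}{\bar u+\bar v}$ with $u=c_\pm$ on $\partial_\pm M$, and symmetrically for $v$; to keep the right-hand side unambiguous one replaces $\bar u+\bar v$ by $\max(\bar u+\bar v,\varepsilon)$ for a fixed small $\varepsilon>0$, to be removed afterwards. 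The frozen right-hand side then lies in $C^{0,\beta}(M)$, so $T$ maps into $C^{2,\beta}(M)^2$ and is compact on $C^{1,\beta}(M)^2$, and its fixed points are the solutions of \eqref{system} with $a=0$ and the prescribed data (once the truncation is seen to be inactive). By Schaefer's fixed point theorem it suffices to bound, uniformly in $\sigma\in[0,1]$, every solution of $(u,v)=\sigma T(u,v)$ in $C^{1,\beta}$.

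The $C^0$ upper bounds are immediate from the sign of the coupling term, $3|\nabla u||\nabla v|+\langle\nabla u,\nabla v\rangle\ge 2|\nabla u||\nabla v|\ge 0$: writing $b=\tr_g(k)\nabla u/|\nabla u|$ (bounded, and $=0$ where $\nabla u=0$), the $u$-equation reads $\Delta u+b\cdot\nabla u\ge 0$, so $u$ is a subsolution of an operator with bounded coefficients and no zeroth-order term and hence attains its maximum on $\partial M$, giving $u\le\max(c_-,c_+)$; likewise $v$ and $u+v$ are bounded above. The same reasoning shows $u+v>0$ throughout ($u+v$ has constant sign on the connected manifold $M$ and is positive on $\partial M$); but the \emph{lower} bound needed to control $1/(u+v)$ is genuinely harder, since a subsolution need not attain its minimum on the boundary, and this is where I expect the main work to lie.

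The main obstacle is therefore an a priori gradient estimate, compounded by the need to obtain it without yet knowing a positive lower bound for $u+v$, and by the critical (quadratic) growth of the nonlinearity in the gradient, which for systems is delicate. I would aim for a \emph{scale-invariant} estimate of Cheng--Yau / Li--Yau type, $|\nabla u|+|\nabla v|\le C\,(u+v)$ on compact subsets (with $C$ depending on the distance to $\partial M$, on the geometry of $(M,g)$ and on $\|k\|$, but not on a lower bound for $u+v$), proved by applying the Bochner formula to a quantity such as $\frac{|\nabla u|^2+|\nabla v|^2}{(u+v)^2}$, differentiating the two equations to replace $\langle\nabla u,\nabla\Delta u\rangle$ by $\langle\nabla u,\nabla f^u\rangle$, and absorbing the resulting terms via Cauchy--Schwarz together with the $C^0$ upper bounds and the tight control $2|\nabla u||\nabla v|\le 3|\nabla u||\nabla v|+\langle\nabla u,\nabla v\rangle\le 4|\nabla u||\nabla v|$ on the coupling; the favorable homogeneity of every term in $(u+v)$ is what makes this closable. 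Boundary gradient estimates are obtained separately from barriers built on a defining function of $\partial M$, using that the Dirichlet data is constant on each component.

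Once the scale-invariant gradient bound is in hand, positivity follows: it yields $|\Delta u|,|\Delta v|\le C\,(u+v)$ on compact subsets, hence $\Delta\log(u+v)\le C'$ there, and combining this with the boundary gradient estimate (which keeps $u+v$ bounded below on a fixed collar of $\partial M$) and a maximum-principle comparison gives $u+v\ge c_0>0$ for a constant depending only on the data. This makes the truncation inactive and, together with the gradient bound, provides the uniform $C^{1,\beta}$ estimate; Schauder estimates then upgrade it to a uniform $C^{2,\alpha}$ bound, Schaefer's theorem produces a fixed point $(u,v)\in C^{2,\alpha}(M)^2$, i.e. the desired solution, and a bootstrap yields $u,v\in C^\infty(M)$ when $g,k$ are smooth.
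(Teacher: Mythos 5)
Your overall framework --- regularize the denominator, run a fixed-point/continuity argument in $\sigma$, and pass to the limit --- matches the paper's (which applies Leray--Schauder to the family \eqref{eq:epsilon}). But two points need correction. First, you have misplaced the difficulty: the positive lower bound on $u+v$ is not ``genuinely harder'', it is immediate. Every term on the right-hand side of each equation carries a factor of $\nabla u$ (resp.\ $\nabla v$), so each equation is a \emph{homogeneous} linear equation $\Delta u - B_u\cdot\nabla u=0$ with
\begin{align*}
B_u=-\sigma\tr_g(k)\,\nu_u+\frac{3|\nabla v|\,\nu_u+\nabla v}{|u+v|+\varepsilon},
\end{align*}
a bounded vector field and \emph{no} zeroth-order term; hence $u$ attains both its maximum \emph{and} its minimum on $\partial M$, and likewise $v$. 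This is exactly the paper's Lemma \ref{mp lemma}, and it gives $u+v\ge c_-+\min(d_-,c_-)-\varepsilon>0$ for free, so no Li--Yau machinery is needed for positivity. (Your stated reason, that ``$u+v$ has constant sign on the connected manifold'', is not an argument; the correct one is the minimum principle just described.)

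Second, and more seriously, the actual crux --- a uniform a priori gradient bound for a \emph{system} whose nonlinearity has critical (quadratic) growth in the gradients --- is left as a program rather than a proof. You ``aim for'' a Cheng--Yau estimate for $(|\nabla u|^2+|\nabla v|^2)/(u+v)^2$ via Bochner, asserting that the homogeneity ``makes this closable'', but you never confront the mixed terms produced when $\nabla\Delta u$ hits $\tfrac{3|\nabla u||\nabla v|+\langle\nabla u,\nabla v\rangle}{u+v}$: these contain Hessians of \emph{both} unknowns weighted by gradients, and for systems with natural growth such estimates are delicate and can fail, so the claim that the computation closes cannot simply be asserted. The paper sidesteps this entirely by a structural change of variables: with $w=v-u$ and $h=(u+v+\varepsilon)^{-1}$, the $w$-equation \eqref{eq:w} has a right-hand side merely \emph{linear} in $\nabla w,\nabla h$, while in the $h$-equation \eqref{eq:h} the dangerous $|\nabla u|^2$-type terms cancel, leaving only $O(|\nabla w|^2)$; this subcritical coupling is then closed by $W^{2,p}$ elliptic estimates plus Gagliardo--Nirenberg interpolation (Lemma \ref{prop:PDE estimate}), after which Schauder and Leray--Schauder finish the argument. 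Without either this cancellation or a completed Bochner computation, your proposal has a genuine gap at its central step.
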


For $a=1$ we expect the existence theory to be more complicated due to the degenerate elliptic nature of the underlying equation. Fortunately, the coupling is still only to first order in this case.
This contrasts other approaches to spacetime IMCF where the underlying PDE systems are more complicated \cite{BrayKhuri, Frauendiener, Jaracz}.
In particular, in our formulation no third-order equations, non-local equations, or backward parabolic equations appear.

\subsection{Organization of the paper}
In Section \ref{S:Motivation} we motivate the PDE systems \eqref{system} in detail, compare it to spinors, and prove Theorem \ref{T:Minkowski}.
Next, we show Theorem \ref{T:main} in Section \ref{S:integral formula proof}.
Additionally, we demonstrate that the same ideas also apply to manifolds equipped with an electrical field and prove in this setting another divergence identity, Theorem \ref{T:charge}.
In Section \ref{S:Penrose} we begin with providing an overview of the Penrose conjecture and survey previous approaches towards solving it. 
Next, we establish Theorem \ref{Thm Intro} and the Penrose inequality in spherical symmetry which both follow from Theorem \ref{T:main}.
Finally, we prove Theorem \ref{T:PDE} in Section \ref{S:PDE}.

\textbf{Acknowledgements.}  
This work was supported in part by the National Science Foundation under Grant No. DMS-1926686 and by the IAS School of Mathematics.
The author would like to thank Hubert~Bray, Simon~Brendle, Demetre~Kazaras, Marcus~Khuri and Yiyue~Zhang for insightful discussions and for their interest in this work.
The author also grateful to the anonymous referee whose suggestions lead to various improvements.


\section{Motivating the PDE systems}\label{S:Motivation}

\subsection{A new perspective on IMCF in the Riemannian setting}\label{new perpsective}

 Let $(M,g)$ be an asymptotically flat, 3-dimensional, complete manifold with non-negative scalar curvature $R$.
 Such manifolds arise naturally in General Relativity (GR) where they are used to model isolated gravitational systems such as stars, galaxies and black holes.
 The apparent horizon of the latter can be modeled by a connected, outermost, minimal surface $\Sigma_0 \subset M$.
We denote with $\Sigma_t$ the IMCF starting from $\Sigma_0$, i.e. unless there are jumps, $\Sigma_t$ flows in outward normal direction with speed $\frac1{H_t}$ where $H_t$ is the mean curvature of $\Sigma_t$.
The famous monotonicity formula for the Hawking mass under IMCF \cite{Geroch, JangWald, HuiskenIlmanen} states that $\partial_t\m_H(\Sigma_t)\ge0$ where
 \begin{align*}
     \m_H(\Sigma)=\sqrt{\frac{|\Sigma|}{16\pi}}\left( 1-\frac1{16\pi}\int_\Sigma H^2dA\right).
 \end{align*}
 An important ingredient in G.~Huisken and T.~Ilmanen's proof of the Riemannian Penrose inequality is to recognize that there is a level-set formulation of IMCF for which one can find weak solutions.
 More precisely, by defining the function $U$ via $\Sigma_t=\partial\{x\in M :U(x)<t\}$, we see that $U$ satisfies the degenerate elliptic equation 
 \begin{align*}
     \div \left(\frac{\nabla U}{|\nabla U|}\right)=|\nabla U|,
 \end{align*}
 where we note that the term on the left hand side equals the mean curvature of the level-sets $\Sigma_t$.
 Reparametrizing $u=e^{\frac12U}$, we obtain the homogeneous equation
 \begin{align}\label{IMCF rescaled}
     \Delta u=\nabla^2_{\nu\nu}u+2\frac{|\nabla u|^2}u
 \end{align}
 where $\nu$ is the outer normal to the level sets $\Sigma_t$.
 In this context, we can rephrase the Hawking mass \emph{monotonicity formula} $\m_H(\Sigma_t)-\m_H(\Sigma_0)\ge0$, $t\ge0$, as integral formula
 \begin{align}\label{mH monotonicity}
  \m_H(\Sigma_t)-\m_H(\Sigma_0)=\frac1{16\pi}\int_{\Omega_t}\left(R|\nabla u|+  \frac{|\mathcal H^2u|^2-(\mathcal H^2_{\nu\nu}u)^2}{|\nabla u|}     \right)dV.
 \end{align}
Here $\Omega_t$ is the region bounded by $\Sigma_0$ and $\Sigma_t$, and $\mathcal H$ is a symmetric 2-tensor defined by
 \begin{align}\label{eq:Hessian IMCF}
     \mathcal H_{ij}u=\nabla_{ij}u-\frac{|\nabla u|^2}ug_{ij}+\frac{\nabla_iu\nabla_ju}{u}.
 \end{align}
The RHS of equation \eqref{mH monotonicity} is non-negative in case $R\ge0$.
 Next, we will take a more general point of view.
In case we do not integrate the integrand on the RHS of equation \eqref{mH monotonicity} over a domain $\Omega$, we obtain the \emph{divergence identity}
\begin{align}\label{mH monotonicity2}
    R|\nabla u|+  \frac{|\mathcal H^2u|^2-(\mathcal H^2_{\nu\nu}u)^2}{|\nabla u|} -2K_u|\nabla u|  = 2\div \left(\nabla |\nabla u|+\frac{|\nabla u|}u\nabla u-\Delta u\frac{\nabla u}{|\nabla u|}\right)
\end{align}
 where $K_u$ is the Gaussian curvature of $\Sigma_t=\{u(x)=t\}$.
 This formulation of the Hawking mass monotonicity naturally generalizes to spacetime setting, cf. Theorem \ref{T:main}.

 Similarly, given a $(2-a)$-harmonic function $U$, the function $u=U^{-\frac{1-a}{1+a}}$ satisfies the homogeneous equation $\Delta u=\nabla_{\nu\nu}u+2\frac{|\nabla u|^2}u$.
 In this setting, V.~Agostiani's, L.~Mazzieri's and F.~Oronzio's monotonicity formula \cite{AMO} takes the form.
 \begin{align}\label{AMO formula}
    R|\nabla u|+  \frac{|\mathcal H^2u|^2-(a\mathcal H^2_{\nu\nu}u)^2}{|\nabla u|} -2K_u|\nabla u|  = 2\div \left(\nabla |\nabla u|+\frac{|\nabla u|}u\nabla u-\Delta u\frac{\nabla u}{|\nabla u|}\right).
\end{align}
We remark that in both cases $|\nabla u|$ is asymptotic to $1$ and equals the static potential in the case of equality.

\subsection{Double null foliations and slices in Minkowski space}

To give a new proof of the spacetime PMT, we introduced together with D.~Kazaras, M.~Khuri in \cite{HKK} \emph{spacetime harmonic} functions which are functions satisfying the PDE $\Delta u=-\tr_g(k)|\nabla u|$.
Subsequently, we established with Y.~Zhang the corresponding rigidity and we refer to \cite{HKKZ} for a detailed overview of spacetime harmonic functions.
In case $(M,g,k)$ arises as a subset of Minkowski space $\R^{3,1}$, the spacetime harmonic function $u$ can be obtained by restricting a null coordinate function of Minkowski space such as $x+t$, to $(M,g,k)$.
Hence, in the case of equality of the spacetime PMT the level-sets $\Sigma_t$ of $u$ can be obtained by intersecting null planes with the initial data set $(M,g,k)\subset \R^{3,1}$.

A similar situation occurs for any $a\in[0,1]$ for our system \eqref{system}.
However, instead of leading to a single null foliation, the level sets $\Sigma_u,\Sigma_v$ of $u,v$ lead to a \textit{double null foliation}, cf. Theorem \ref{T:Minkowski}.

A similar situation occurs in the spinorial setting:
An $SL(2,\C)$ Weyl spinor gives rise to a single null vector field while a Dirac spinor gives rise to two null vector fields \cite[page 352]{Wald}.
This connection will be elaborated upon in a forthcoming paper.

\begin{figure}[H]
\includegraphics[totalheight=7cm]{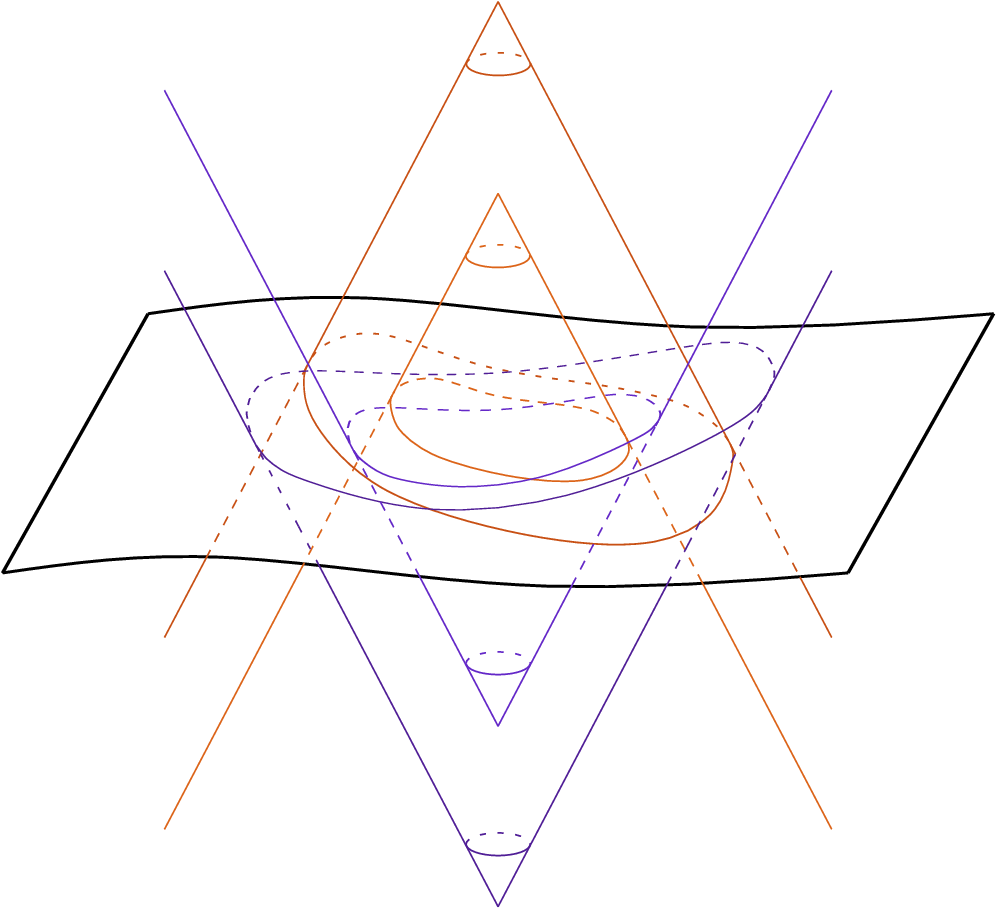}
\caption{
The double null foliation (\textcolor{darkorange}{$\Sigma_u$},\textcolor{darkblue}{$\Sigma_v$}) for the initial data set $(M,g,k)\subset \R^{3,1}$ is obtained by intersecting past and future directed lightcones in $\R^{3,1}$ with $(M,g,k)$.
}
\end{figure}

\begin{figure}[H]
    \begin{subfigure}{0.4\textwidth}
    \includegraphics[width=\textwidth]{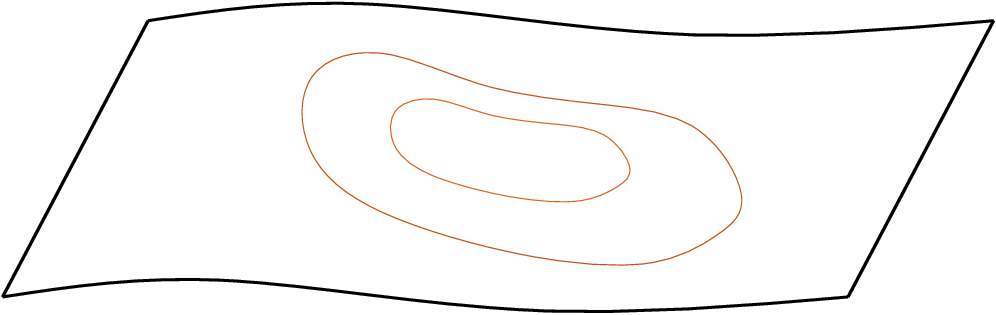}
    \caption*{$({M,g,k})\subset\R^{3,1}$ with \textcolor{darkorange}{$\Sigma_u$} Foliation}
    \label{fig:first}
\end{subfigure}
\begin{subfigure}{0.4\textwidth}
    \includegraphics[width=\textwidth]{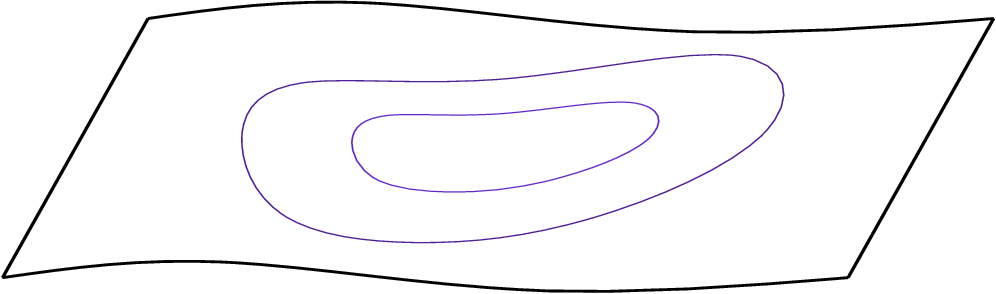}
    \caption*{$({M,g,k})\subset\R^{3,1}$ with \textcolor{darkblue}{$\Sigma_v$} Foliation}
    \label{fig:second}
\end{subfigure}
\caption{There are no monotone quantities associated with the level sets \textcolor{darkorange}{$\Sigma_u$} and \textcolor{darkblue}{$\Sigma_v$} individually. Hence, a divergence identity as in Theorem \ref{T:main} is a more general concept than a monotonicity formula.}
\end{figure}

Next, we show that for any initial data set $(M,g,k)$ contained in Minkowski space $(\R^{3,1},\hat g)$ the functions $u=r+t$ and $v=r-t$ solve system \eqref{system} for every $a\in[0,1]$.

\begin{proof}[Proof of Theorem \ref{T:Minkowski}]
The key observation is that $\hat\nabla^2(v-u)=0$ and $\hat\nabla^2(uv)=2\hat g$.
This implies
\begin{align*}
   2\hat g=& v\hat\nabla^2u+u\hat \nabla^2v+\hat \nabla u\otimes \hat \nabla v+\hat \nabla v\otimes \hat\nabla u
   =(u+v)\hat\nabla^2u+\hat \nabla u\otimes \hat \nabla v+\hat\nabla v\otimes \hat \nabla u.
\end{align*}
Restricting this identity onto $T^\ast M\otimes T^\ast M$ we obtain
\begin{align*}
     2 g=&(u+v)\hat\nabla^2|_{T^\ast M\otimes T^\ast M}u+ \nabla u\otimes \nabla v+ \nabla v\otimes \nabla u.
\end{align*}
Next, let us denote with $N$ the future pointing unit normal of $M\subset \R^{3,1}$.
Since $\hat \nabla u$ and $\hat \nabla v$ are null, 
\begin{align*}
    \hat\nabla^2|_{T^\ast M\otimes T^\ast M}u=&\nabla^2u+kN(u)=\nabla^2u+k|\nabla u|.
\end{align*}
Moreover, 
\begin{align*}
    2=\hat g(\hat \nabla u,\hat \nabla v)=\langle \nabla u,\nabla v\rangle+\hat g(N(u)N,N(v)N)=\langle \nabla u,\nabla v\rangle+|\nabla u||\nabla v|.
\end{align*}
Combining everything, yields $\overline \nabla^2_+u=0$.
Similarly, one can show $ \overline \nabla^2_-v=0$.
\end{proof}

Another example of a double null foliation will be given in Section \ref{S:integral formula charge} in the context of the spacetime PMT with charge.

\subsection{Analytic advantages}\label{SS:analytic}

Why is it necessary to use two functions to formulate spacetime IMCF instead of just a single function? 
Similarly, why is it necessary to have a pair of spacetime $p$-Green's functions?
To answer these questions we will focus on system \eqref{system} for the case $a=0$.

To prove the Riemannian PMT V.~Agostiani, L.~Mazzieri and F.~Oronzio used a new monotonicity formula for the Green's function \eqref{AMO formula}, i.e. solutions of the PDE $\Delta u=2\frac{|\nabla u|^2}u$.
Previously, H.~Bray, M.~Khuri, D.~Kazaras, and D.~Stern \cite{BKKS} used asymptotically linear harmonic functions based on D.~Stern's intergral identity \cite{Stern}.
In a joint work with M.~Khuri and D.~Kazaras, we \enquote{spacetime-ized} the Laplace equation and the corresponding integral formula which led to a proof of the spacetime PMT \cite{HKK}.
To \enquote{spacetime-ize} the Laplace equation, we replaced the Laplace equation $\Delta u=0 $ with the \enquote{spacetime Laplace} equation $\Delta u=-\tr_gk|\nabla u|$.
It now appears natural to consider the PDE $\Delta u=-\tr_g(k)|\nabla u|+2\frac{|\nabla u|^2}u$ to \enquote{spacetime-ize} Agostiani-Mazzieri-Oronzio's monotonicity formula.
However, this naive approach is flawed.

This contrasts spinor methods where such straightforward combination of PDEs usually works.
For instance, to prove the spacetime PMT one solves the Dirac equation $\slashed D\psi=-\frac12\tr_g(k)e_0\psi$, to prove the PMT with charge one solves $\slashed D\psi=Ee_0\psi$, and for the spacetime PMT with charge one solves $\slashed D\psi=Ee_0\psi-\frac12\tr_g(k)e_0\psi$ \cite[Theorem 11.9]{BartnikChrusciel}.
Using level-sets the situation becomes more complicated and one is again led to a system of two PDEs, cf. Theorem \ref{T:charge}.

There are two complications with this above approach.
First, this PDE does not characterize slices in Minkowski spacetime. 
Neither $u=r+t$ nor any other function solves the equation $\Delta u=-\tr_g(k)|\nabla u|+2\frac{|\nabla u|^2}u$ for every IDS $(M,g,k)$ in $\R^{3,1}$.
However this is a necessary property since slices of Minkowski space arise as case of equality of the PMT.

Second, when attempting to obtain a divergence identity there are algebraic obstacles involving cross terms. 
More precisely, bad terms of the form $-\tr_g(k)\frac{|\nabla u|^2}u$ are left over in the computation.
Similarly, the PDE  $\Delta v=+\tr_g(k)|\nabla v|+2\frac{|\nabla v|^2}u$ leads to the bad term $+\tr_g(k)\frac{|\nabla v|^2}v$.
Thus, if we could just add both divergence identities the bad terms would cancel perfectly as long as $\frac{|\nabla u|^2}u$ and $\frac{|\nabla v|^2}v$ coincide.
This is made rigorous precisely by coupling the PDE which ensure that the factors in front of the $\tr_g(k)$ terms coincide.

A similiar situation takes place for $a=1$. In this case (rescaled) null IMCF (with speed $\frac1{\theta_+}$ and $\frac1{\theta_-}$) takes the form
  \begin{align*}
     \theta_+|\nabla u|=&2\frac{|\nabla u|^2}u,\\
    \theta_-|\nabla v|=& 2\frac{|\nabla v|^2}v.
 \end{align*}
Coupling these PDE via
  \begin{align*}
     \theta_+|\nabla u|=&2\frac{|\nabla u||\nabla v|+\langle \nabla u,\nabla v\rangle}{u+v},\\
    \theta_-|\nabla v|=& 2\frac{|\nabla u||\nabla v|+\langle \nabla u,\nabla v\rangle}{u+v}.
 \end{align*}
gives system \eqref{system} for $a=1$.
Again the crossterms will cancel in the computation of the Hawking mass monotonicity formula.



\section{Proof of the divergence identities}\label{S:integral formula proof}

In this section we prove Theorem \ref{T:main} as well as a divergence identity for manifolds equipped with an additional vector field.
Even though, the underlying PDE appear much more complicated compared to their Riemannian analogs, the computations are surprisingly short.

\begin{proof}[Proof of Theorem \ref{T:main}]
Introducing the notation
\begin{align*}
    \begin{split}
        \operatorname{I}=&4\Delta u\frac{|\nabla v|}{u+v}+4\nabla_{\nu_u\nu_v}u\frac{|\nabla v|}{u+v}-4\frac{|\nabla v|(|\nabla u||\nabla v|+\langle \nabla u,\nabla v\rangle)}{(u+v)^2},\\
        \operatorname{II}=&4\Delta v\frac{|\nabla u|}{u+v}+4\nabla_{\nu_u\nu_v}v\frac{|\nabla u|}{u+v}-4\frac{|\nabla u|(|\nabla u||\nabla v|+\langle \nabla u,\nabla v\rangle)}{(u+v)^2},
    \end{split}
\end{align*}
we have
\begin{align}\label{a}
    &4\operatorname{div}\left(\frac{|\nabla u|\nabla v+|\nabla v|\nabla u}{u+v}\right)=\operatorname{I}+\operatorname{II}.
\end{align}
Next, we recall 
    \begin{align*}
        \overline \nabla^2_+u=\nabla^2 u+k|\nabla u|-\frac{|\nabla u||\nabla v|+\langle \nabla u,\nabla v\rangle}{u+v}g+\frac{\nabla u\otimes \nabla v+\nabla v\otimes \nabla u}{u+v}
    \end{align*}
which implies
    \begin{align}\label{b}
    \begin{split}
        &|\overline{\nabla}^2_+u|^2\\=&
|\nabla^2u|^2+\frac{5|\nabla u|^2|\nabla v|^2+2|\nabla u||\nabla v|\langle \nabla u,\nabla v\rangle+\langle \nabla u,\nabla v\rangle^2}{(u+v)^2}\\
&-2\Delta u\frac{|\nabla u||\nabla v|+\langle \nabla u,\nabla v\rangle}{u+v}+4\nabla_{\nu_u\nu_v}u\frac{|\nabla u||\nabla v|}{u+v}\\
&+|k|^2|\nabla u|^2+2k_{ij}|\nabla u|\nabla_{ij}u -|\nabla u|A     
    \end{split}
    \end{align}
where 
\begin{align*}
    A= 2k_{ij}\left(   \frac{|\nabla u||\nabla v|+\langle \nabla u,\nabla v\rangle}{u+v}g_{ij}-\frac{\nabla_iu\nabla_jv+\nabla_ju\nabla_iv}{u+v}      \right)   .
\end{align*}
Moreover, 
\begin{align}\label{c}
    \begin{split}
        &(\overline{\Delta}_+u-a(\overline\nabla^2_+)_{\nu_u\nu_u}u)^2\\
        =&(\Delta u)^2-2\Delta u\frac{3|\nabla u||\nabla v|+\langle \nabla u,\nabla v\rangle}{u+v}+\frac{9|\nabla u|^2|\nabla v|^2+6|\nabla u||\nabla v|\langle\nabla u,\nabla v\rangle+\langle \nabla u,\nabla v\rangle^2}{(u+v)^2}\\
        &-(a(\overline\nabla^2_+)_{\nu_u\nu_u}u)^2-2a(\overline \Delta_+u-a(\overline\nabla^2_+)_{\nu_u\nu_u}u)(\overline\nabla^2_+)_{\nu_u\nu_u}u\\
        &+\operatorname{tr}_g(k)^2|\nabla u|^2+2\tr_g(k)|\nabla u|\Delta u-|\nabla u|B
    \end{split}
\end{align}
where
\begin{align*}
    B= 2\operatorname{tr}_g(k)\frac{3|\nabla u||\nabla v|+\langle \nabla u,\nabla v\rangle}{u+v}.
\end{align*}
Combining equations \eqref{a},\eqref{b} and \eqref{c} with the PDE $\overline \Delta_+u=a(\overline\nabla^2_+)_{\nu_u\nu_u}u$ yields
\begin{align}\label{1234}
    \begin{split}
   & \frac{|\overline{\nabla}^2_+u|^2-(a(\overline\nabla^2_+)_{\nu_u\nu_u}u)^2}{|\nabla u|}-\operatorname{I}\\
   =&\frac{|\nabla^2u|^2-(\Delta u)^2}{|\nabla u|}+|k|^2|\nabla u|+2k_{ij}\nabla_{ij}u-\operatorname{tr}_g(k)^2|\nabla u|-2\tr_g(k)\Delta u-A+B.
    \end{split}
\end{align}
Next, recall Stern's identity
\begin{align}\label{3}
2\div\left(\nabla |\nabla u|-\Delta u\frac{\nabla u}{|\nabla u|}\right)=\frac1{|\nabla u|}(|\nabla^2u|^2+|\nabla u|^2(R-2K_u)-(\Delta u)^2),
\end{align}
cf. (4.8) of \cite{BHKKZ}. Consequently,
\begin{align}\label{eq12345}
\begin{split}
&\frac{|\nabla^2u|^2-(\Delta u)^2}{|\nabla u|}+|k|^2|\nabla u|+2k_{ij}\nabla_{ij}u-\operatorname{tr}_g(k)^2|\nabla u|-2\tr_g(k)\Delta u\\
=&-2\mu|\nabla u|-2\langle J,\nabla u\rangle+2K_u|\nabla u|+2\div\left(\nabla |\nabla u|-\Delta u\frac{\nabla u}{|\nabla u|}+k(\nabla u,\cdot)-\tr_g(k)\nabla u\right).
\end{split}
\end{align}
Similarly, 
\begin{align}\label{123456}
    \begin{split}
    &\frac{|\overline{\nabla}^2_+v|^2-(a(\overline\nabla^2_-)_{\nu_v\nu_v}v)^2}{|\nabla v|}-\operatorname{II}\\
    =&\frac{|\nabla^2v|^2-(\Delta v)^2}{|\nabla v|}+|k|^2|\nabla v|-2k_{ij}\nabla_{ij}v-\operatorname{tr}_g(k)^2|\nabla v|+2\tr_g(k)\Delta v+A-B
    \end{split}
\end{align}
where we emphasize the opposite signs in front of $A$ and $B$ compared to equation \eqref{1234}.
Combining equations \eqref{a}, \eqref{1234}, \eqref{eq12345} and \eqref{123456}, the result follows.
\end{proof}

\begin{remark}\label{remark1}
    Observe that we never used any properties of the term $(a(\overline\nabla^2_+)_{\nu_u\nu_u}u)^2$ in the above computation.
    Indeed, we can replace this term by an arbitrary function $f_1$ which may or may not depend on $u,v$.
    More precisely, if $u,v$ solve $\overline \Delta_+u=f_1$ and $\overline\Delta_-v=f_2$, then
    \begin{align*}
\begin{split}
\div Y=&\frac{|\overline\nabla_+^2u|^2-f_1^2}{|\nabla u|}+\frac{|\overline\nabla_+^2v|^2-f_2^2}{|\nabla v|}\\
&+2\mu(|\nabla u|+|\nabla v|)+2\langle J,\nabla u-\nabla v\rangle\\
&-2K_u|\nabla u|-2K_v|\nabla v|.
\end{split}
\end{align*}
This leads to new monotonicity formulas even in the Riemannian setting.
\end{remark}

\begin{remark}
    For applications it is necessary to obtain a version of Theorem \ref{T:main} where $|\nabla u|$ is allowed to vanish.
    We expect the corresponding modifications to be straightforward and identical to the ones in the Riemannian setting, cf. \cite{AMMO, HKK, Stern}.
\end{remark}

\begin{remark}
Besides the monotonicity of the Hawking mass, another important feature of (Riemannian) IMCF is its exponential area growth of its level sets.
This is equivalent to $u^{-2}|\Sigma_u|^2$ being constant for rescaled IMCF $\Delta u=\nabla_{\nu_u\nu_u}u+2\frac{|\nabla u|^2}u$.
In case $k=\xi g$ for some function $\xi$, it is easy to see that $(u+v)^{-2}|\Sigma|^2$ is constant for all $\Sigma$ which are a level-set for both $u$ and $v$.
More precisely, for any two such surfaces $\Sigma_1$ and $\Sigma_2$ enclosing a domain $\Omega$ we have
\begin{align*}
   \left( \int_{\Sigma_2}-\int_{\Sigma_1}\right)\frac{2}{(u+v)^2}dA=&
   \left( \int_{\Sigma_2}-\int_{\Sigma_1}\right)\frac{1}{(u+v)^2}\nu\cdot\left(\frac{\nabla u}{|\nabla u|}+\frac{\nabla v}{|\nabla v|}\right)dA=\int_{\Omega}\frac{k_{\nu_v\nu_v}-k_{\nu_u\nu_u}}{(u+v)^2}dA=0.
\end{align*}
For slices of Minkowksi space the last equality still holds even without the assumption $k=\xi g$.
\end{remark}


\subsection{Spacetime charged harmonic functions}\label{S:integral formula charge}

In \cite{HKK} the spacetime PMT has been proven via spacetime harmonic functions, and in \cite{BHKKZ} the PMT with charge has been proven via \emph{charged harmonic} functions, i.e. functions solving the PDE $\Delta u=\langle E,\nabla u\rangle$ where $E$ is the electrical field.
Naively, the PDE $\Delta u=\langle E,\nabla u\rangle-\tr(k)|\nabla u|$ should lead to a proof of the spacetime PMT with charge.
However, as explained in Section \ref{SS:analytic} it is impossible due to the presence of bad cross terms between $E$ and $k$ appearing in the divergence identity.
Again, a single PDE is not sufficient and we need to consider a PDE system which should be compared to the spinorial approach in \cite[Theorem 11.9]{BartnikChrusciel}.

Given $u,v$, we set $\eta=\frac{\nu_u+\nu_v}{|\nu_u+\nu_v|}$ in case $\nu_u\ne\nu_v$ and $\eta=0$ otherwise.

 \begin{theorem}\label{T:charge}
 Let $E$ be a divergence free vector field on an IDS $(M,g,k)$.
Suppose $u,v$ solve the system
\begin{align}\label{system charged}
\begin{split}
    \Delta u=\xi E_\eta-\tr_g(k)|\nabla u|\\
    \Delta v= \xi E_\eta+\tr_g(k)|\nabla v|
    \end{split}
\end{align}
with $|\nabla u|,|\nabla v|\ne0$, and where $\xi=\sqrt{|\nabla v||\nabla u|}$. 
Then we have
\begin{align}\label{integral formula charged}
\begin{split}
\div(Z)=&\frac1{2|\nabla u|}(    |\mathcal E_+^2u|^2  +|\nabla u|^2(2\mu-2K_u-2|E|^2)+2|\nabla u|\langle J,\nabla u\rangle)\\
&+\frac1{2|\nabla v|}(    |\mathcal E^2_-v|^2  +|\nabla v|^2(2\mu-2K_v-2|E|^2)-2|\nabla v|\langle J,\nabla v\rangle).
\end{split}
\end{align}
where $K_u,K_v$ are the Gaussian curvatures of the level-sets of $u,v$, 
\begin{align*}
    Z=&\nabla |\nabla u|-\Delta u\frac{\nabla u}{|\nabla u|}+\nabla |\nabla v|-\Delta v\frac{\nabla v}{|\nabla v|}+2\xi^{-1}(|\nabla u||\nabla v|+\langle \nabla u,\nabla v\rangle)E\\
    &-\tr_g(k)\nabla u+\tr_g(k)\nabla v+k(\nabla u,\cdot)-k(\nabla v,\cdot),
\end{align*}
and 
\begin{align*}
  (  \mathcal E^2_+)_{ij} u
    =&\nabla^2_{ij}u+\xi\eta_iE_j+\xi\eta_jE_i-\xi E_\eta g_{ij}+k_{ij}|\nabla u|,\\
      (  \mathcal E^2_-)_{ij} v
    =&\nabla^2_{ij}v+\xi \eta_iE_j+\xi \eta_jE_i-\xi E_\eta g_{ij}-k_{ij}|\nabla v|.
    \end{align*}
\end{theorem}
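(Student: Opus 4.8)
The plan is to follow the same computational scheme used in the proof of Theorem~\ref{T:main}, treating the right-hand sides $\xi E_\eta - \tr_g(k)|\nabla u|$ and $\xi E_\eta + \tr_g(k)|\nabla v|$ of system~\eqref{system charged} as the ``prescribed'' Laplacians $f_1, f_2$ in the spirit of Remark~\ref{remark1}, while carefully tracking the new cross terms involving the electrical field $E$. Concretely, I would first expand $|(\mathcal E^2_+)_{ij}u|^2$: the square of $\nabla^2 u + k|\nabla u|$ contributes $|\nabla^2 u|^2 + 2k_{ij}\nabla_{ij}u|\nabla u| + |k|^2|\nabla u|^2$ as in the uncharged computation, and the extra pieces $\xi(\eta_i E_j + \eta_j E_i) - \xi E_\eta g_{ij}$ contribute a block of terms that, crucially, are designed so that $|\xi\eta_i E_j + \xi\eta_j E_i - \xi E_\eta g_{ij}|^2$ together with its inner product against $\nabla^2 u + k|\nabla u|$ combine cleanly; one uses $|\eta|=1$, the identity $\langle\eta,\nabla u\rangle = |\nabla u|\langle\nu_u,\eta\rangle$, and the analogous contractions to reduce these to expressions in $E_\eta$, $|E|^2$, $\xi$, and $\Delta u$.

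Next I would compute $(\Delta u)^2 = (\xi E_\eta - \tr_g(k)|\nabla u|)^2 = \xi^2 E_\eta^2 - 2\xi E_\eta \tr_g(k)|\nabla u| + \tr_g(k)^2|\nabla u|^2$ and subtract, so that in $\frac{|\mathcal E^2_+ u|^2 - 0}{|\nabla u|}$ (here there is no ``$-f_1^2$'' to cancel since we are not rescaling — this is the non-divergence form) the raw $\xi^2 E_\eta^2$ pieces and the $\tr_g(k)^2|\nabla u|$ pieces and the $\tr_g(k)\Delta u$ pieces must be organized. Then one invokes Stern's identity~\eqref{3} to trade $\frac{|\nabla^2 u|^2 - (\Delta u)^2}{|\nabla u|}$ for $|\nabla u|(R - 2K_u) + 2\div(\nabla|\nabla u| - \Delta u \frac{\nabla u}{|\nabla u|})$, and rewrites $|k|^2|\nabla u| + 2k_{ij}\nabla_{ij}u - \tr_g(k)^2|\nabla u| - 2\tr_g(k)\Delta u$ using the Gauss and constraint equations into $-2\mu|\nabla u| - 2\langle J,\nabla u\rangle$ plus the divergence of $k(\nabla u,\cdot) - \tr_g(k)\nabla u$, exactly as in~\eqref{eq12345}. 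The term $-2|E|^2|\nabla u|$ should emerge from the $E$-block after collecting, matching the $(2\mu - 2K_u - 2|E|^2)$ factor in the claimed formula, and the divergence of $2\xi^{-1}(|\nabla u||\nabla v| + \langle\nabla u,\nabla v\rangle)E$ must be produced from the cross terms mixing $E$ with first derivatives of $u$ and the $\xi$-weights.

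Then I would run the identical computation for $v$ with the sign of $k$ reversed and the sign of the $\tr_g(k)$-term in the PDE reversed, obtaining the second line of~\eqref{integral formula charged} together with the divergence of $\tr_g(k)\nabla v + k(\nabla v,\cdot)$ (with signs flipped) and the same $E$-divergence term; adding the two halves, the $\tr_g(k)$ cross terms of the form $\mp\xi E_\eta \tr_g(k)$ cancel in pairs precisely because $\xi = \sqrt{|\nabla u||\nabla v|}$ is symmetric in $u$ and $v$ and the $E_\eta$-coupling is the same in both equations (this is the whole point of the coupling, cf.\ Section~\ref{SS:analytic}), and the two copies of the $E$-divergence term add to give the coefficient $2\xi^{-1}(|\nabla u||\nabla v| + \langle\nabla u,\nabla v\rangle)$ in $Z$. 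The main obstacle I anticipate is the bookkeeping of the $E$-block in the expansion of $|\mathcal E^2_\pm|^2$: one must verify that the terms $2\xi E_\eta \Delta u$, $|\xi\eta\otimes E + \xi E\otimes\eta - \xi E_\eta g|^2 = 2\xi^2|E|^2 + 2\xi^2 E_\eta^2 - 2\xi^2 E_\eta^2 = 2\xi^2|E|^2$ (using $|\eta|=1$ and $3\xi^2 E_\eta^2$ from the trace part minus $4\xi^2 E_\eta^2$ from the mixed part... — this contraction needs care), the inner product $2\langle\nabla^2 u + k|\nabla u|, \xi\eta\otimes E + \xi E\otimes\eta - \xi E_\eta g\rangle$, and the $\xi$-normalization $\xi^2 = |\nabla u||\nabla v|$ all conspire to produce exactly $-2|E|^2|\nabla u|$ and the stated divergence term with no leftover cross terms between $E$ and $k$. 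Verifying that these genuinely cancel — rather than merely being controllable — is the crux, and it is exactly the phenomenon that forces the two-function formulation rather than a single charged-spacetime-harmonic PDE.
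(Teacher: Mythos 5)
Your overall scheme coincides with the paper's: expand $|\mathcal E^2_\pm|^2$, feed the pure Hessian part into Stern's identity \eqref{3}, convert the $k$-blocks into $\mu$, $J$ and divergences exactly as in \eqref{eq12345}, and let the symmetric $\xi E_\eta$ coupling kill the $\tr_g(k)E_\eta$ cross terms upon adding the two halves. However, the step you yourself flag as the crux is precisely the one you leave unverified, and it is where the only genuinely new idea of this proof lives. After expanding, the term that fits into neither the Stern divergence nor the $\mu$, $J$ block is $4\xi\nabla^2_{ij}u\,\eta_iE_j/|\nabla u|$ (and its $v$-analogue). The paper disposes of it by integrating by parts against the divergence-free field $E$,
\begin{align*}
\frac{\xi}{|\nabla u|}\nabla^2_{ij}u\,\eta_iE_j=\div\Bigl(\sqrt{\tfrac{|\nabla v|}{|\nabla u|}}\,\nabla_\eta u\,E\Bigr)-\nabla_j\sqrt{\tfrac{|\nabla v|}{|\nabla u|}}\,E_j\,\nabla_\eta u-\sqrt{\tfrac{|\nabla v|}{|\nabla u|}}\,\nabla_iu\,\nabla_j\eta_i\,E_j,
\end{align*}
and the two non-divergence remainders do \emph{not} vanish separately: the first pair (from the $u$- and $v$-equations) cancels only because $\eta$ is the normalized bisector of $\nu_u$ and $\nu_v$, so $\langle\nu_u,\eta\rangle=\langle\nu_v,\eta\rangle$ and hence $\sqrt{|\nabla v|/|\nabla u|}\,\nabla_\eta u=\sqrt{|\nabla u|/|\nabla v|}\,\nabla_\eta v$; the second pair cancels because $|\eta|=1$ forces $\langle\nabla_E\eta,\eta\rangle=0$. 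This is exactly why $\eta$ and the geometric mean $\xi=\sqrt{|\nabla u||\nabla v|}$ appear in the system. Your proposal never identifies either mechanism (nor the role of $\div E=0$), so as written it does not establish that the leftover $E$-terms assemble into the divergence of the stated $Z$ rather than leaving uncontrolled remainders; this is a genuine gap, not mere bookkeeping.

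A smaller point: your tentative contraction $|\xi(\eta\otimes E+E\otimes\eta)-\xi E_\eta g|^2=2\xi^2|E|^2$ is off. In three dimensions it equals $2\xi^2|E|^2+\xi^2E_\eta^2$; the $-\xi^2E_\eta^2$ appearing in the paper's expansion arises only after combining this with the cross term $-2\xi E_\eta\Delta u$ and substituting the PDE $\Delta u=\xi E_\eta-\tr_g(k)|\nabla u|$, which simultaneously produces the $+2\tr_g(k)|\nabla u|\xi E_\eta$ term that must cancel against its counterpart from the $v$-equation. Since you hedge on this computation, it is not a fatal error, but it needs to be done correctly for the $\tr_g(k)$--$E_\eta$ cancellation you invoke to actually occur.
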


Observe that the above formula recovers Proposition 3.2 of \cite{HKK} in case $E=0$ which has been the main ingredient in the proof of the spacetime PMT, and equation (8.7) of \cite{BHKKZ} in case $k=0$ which has been the main ingredient in the proof of the PMT with charge.
If $\mu\ge|J|+|E|$, the RHS of equation \eqref{integral formula charged} can be estimated by the Gaussian curvature terms which can be controlled with the help of Gauss-Bonnet's theorem upon integration.

\begin{proof}
The divergence identity \eqref{integral formula charged} in Theorem \ref{T:charge} reduces to the one for spacetime harmonic functions in case $\eta=0$, cf. \cite[Proposition 3.2]{HKK}.
Therefore, we assume without loss of generality that $\nu_u\ne-\nu_v$.
We compute 
\begin{align*}
    |\mathcal E^2_+u|^2
     =&|\nabla^2u|^2+2\xi^2|E|^2+4\xi\nabla_{ij}u\eta_iE_j-\xi^2E_\eta^2\\
    &+2(\xi\eta_iE_j+\xi\eta_jE_i-\xi E_\eta g_{ij})k_{ij}|\nabla u|\\
    &+|k|^2|\nabla u|^2+2\nabla^2_{ij}uk_{ij}|\nabla u|+2\tr_g(k)|\nabla u|\xi E_\eta
\end{align*}
and
\begin{align*}
    |\mathcal E^2_-v|^2
     =&|\nabla^2v|^2+2\xi^2|E|^2+4\xi\nabla_{ij}v\eta_iE_j-\xi^2E_\eta^2\\
    &-2(\xi\eta_iE_j+\xi\eta_jE_i-\xi E_\eta g_{ij})k_{ij}|\nabla v|\\
    &+|k|^2|\nabla u|^2-2\nabla^2_{ij}uk_{ij}|\nabla u|-2\tr_g(k)|\nabla u|\xi E_\eta.
\end{align*}
Combining these identities with equation \eqref{3} yields
\begin{align*}
&\div\left(\nabla |\nabla u|-\Delta u\frac{\nabla u}{|\nabla u|}+\nabla |\nabla v|-\Delta v\frac{\nabla v}{|\nabla v|}\right)
\\=&\frac1{2|\nabla u|}(    |\mathcal E^2_+u|^2-2\xi^2|E|^2-4\xi\nabla^2_{ij}u\eta_iE_j    )\\
&+\frac1{2|\nabla u|}(|\nabla u|^2(R_M-2K_u)+2\tr_g(k)\Delta u|\nabla u|+(\tr_g(k)^2-|k|^2)|\nabla u|^2-2\nabla^2_{ij}uk_{ij}|\nabla u|)\\
&+\frac1{2|\nabla v|}(    |\mathcal E_-^2v|^2-2\xi^{2}|E|^2-4\xi\nabla^2_{ij}v\eta_iE_j    )\\
&+\frac1{2|\nabla v|}(|\nabla v|^2(R_M-2K_v)-2\tr_g(k)\Delta v|\nabla v|+(\tr_g(k)^2-|k|^2)|\nabla v|^2+2\nabla^2_{ij}vk_{ij}|\nabla v|).
\end{align*}
Next, we compute
\begin{align*}
\frac1{|\nabla u|}\xi\nabla_{ij}^2u\eta_iE_j=&\div\left( \sqrt{\frac{|\nabla v|}{|\nabla u|}} \nabla_iu\eta_iE   \right)-\nabla_j\sqrt{\frac{|\nabla v|}{|\nabla u|}}E_j\nabla_\eta u-\sqrt{\frac{|\nabla v|}{|\nabla u|}}\nabla_iu\nabla_j\eta_iE_j,
\end{align*}
and 
\begin{align*}
\frac1{|\nabla v|}\xi\nabla_{ij}^2v\eta_iE_j=&\div\left( \sqrt{\frac{|\nabla u|}{|\nabla v|}} \nabla_iv\eta_iE   \right)-\nabla_j\sqrt{\frac{|\nabla u|}{|\nabla v|}}E_j\nabla_\eta v-\sqrt{\frac{|\nabla u|}{|\nabla v|}}\nabla_iv\nabla_j\eta_iE_j.
\end{align*}
Observe that
\begin{align*}
    &2\nabla_j\sqrt{\frac{|\nabla v|}{|\nabla u|}}E_j\nabla_\eta u+2\nabla_j\sqrt{\frac{|\nabla u|}{|\nabla v|}}E_j\nabla_\eta v\\
    =&\xi^{-1}\nabla_j|\nabla v|E_j\nabla_\eta u-\xi^{-1}|\nabla v||\nabla u|^{-1}\nabla_j|\nabla u|\nabla_\eta uE_j\\
    &+\xi^{-1}\nabla_j|\nabla u|E_j\nabla_\eta v-\xi^{-1}|\nabla u||\nabla v|^{-1}\nabla_j|\nabla v|\nabla_\eta vE_j=0
\end{align*}
where we used that $\langle \nu_u,\eta\rangle=\langle \nu_v,\eta\rangle$.
Moreover,
\begin{align*}
    &\sqrt{\frac{|\nabla v|}{|\nabla u|}}\nabla_iu\nabla_j\eta_iE_j+\sqrt{\frac{|\nabla u|}{|\nabla v|}}\nabla_iv\nabla_j\eta_iE_j\\
    =&\xi^{-1}\langle \nabla_E\eta,\nabla u |\nabla v|+\nabla v|\nabla u|\rangle \\
    =&\xi^{-1}||\nabla u|\nabla v+\nabla u|\nabla v||\langle \nabla_E\eta,\eta\rangle=0
\end{align*}
where we used that $\langle \eta,\eta\rangle=1$ and $\langle \nabla_E\eta,\eta\rangle=0$.
Consequently,
\begin{align*}
&\div\left(\nabla |\nabla u|-\Delta u\frac{\nabla u}{|\nabla u|}+\nabla |\nabla v|-\Delta v\frac{\nabla v}{|\nabla v|}+2\sqrt{\frac{|\nabla u|}{|\nabla v|}}\nabla_\eta vE+2\sqrt{\frac{|\nabla v|}{|\nabla u|}}\nabla_\eta uE\right)
\\=&\frac1{2|\nabla u|}(    |\mathcal E^2_+u|^2-2a\xi^2|E|^2  )\\
&+\frac1{2|\nabla u|}(|\nabla u|^2(R_M-2K_u)+2\tr_g(k)\Delta u|\nabla u|+(\tr_g(k)^2-|k|^2)|\nabla u|^2-2\nabla^2_{ij}uk_{ij}|\nabla u|)\\
&+\frac1{2|\nabla v|}(    |\mathcal E^2_-v|^2-2\xi^{2}|E|^2   )\\
&+\frac1{2|\nabla v|}(|\nabla v|^2(R_M-2K_v)-2\tr_g(k)\Delta v|\nabla v|+(\tr_g(k)^2-|k|^2)|\nabla v|^2+2\nabla^2_{ij}vk_{ij}|\nabla v|).
\end{align*}
Rearranging terms finishes the proof.
\end{proof}


\section{Spacetime IMCF in the Schwarzschild spacetime}\label{S:Penrose}

 \subsection{The Penrose Conjecture}

General Relativity (GR) is concerned with the study of Lorentzian manifolds $(\hat M^4,\hat g)$ satisfying the Einstein equations $\hat \Ric-\frac12\hat R\hat g=8\pi \hat T$ where $\hat\Ric,\hat R$ are the Ricci and scalar curvature of $\hat g$, and $\hat T$ is the stress-energy-momentum tensor.
 An interesting feature of GR is the existence of singularities which can arise even in elementary examples such as the Schwarzschild spacetime.
 However, in Schwarzschild the singularity is hidden behind the event horizon and is believed that this is generically the case\footnote{The additional assumption of genericity is necessary as Christodoulou demonstrated in \cite{Christodoulou, Christodoulou2}.} which is known as the \emph{Cosmic Censorship Conjecture}.
 To verify this conjecture, R.~Penrose proposed in 1973 a test described in Figure \ref{F:Penrose} below.

 \begin{figure}[H]
\begin{picture}(0,-100)
\put(7,168){$t$}
\put(206,27){$(M,g,k)$}
\put(206,117){Kerr}
\end{picture}
\centering
\includegraphics[totalheight=6cm]{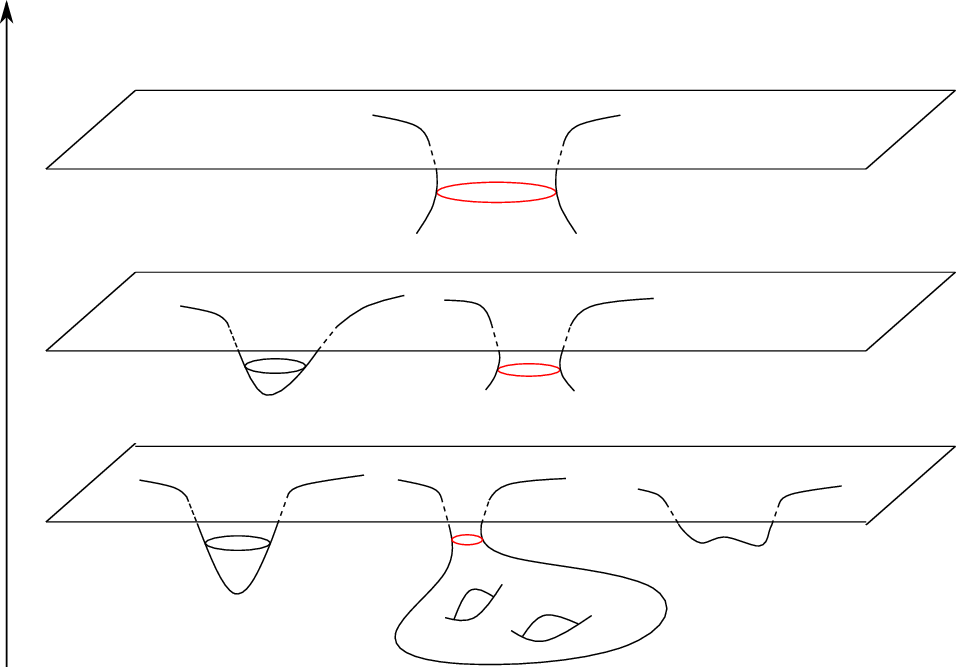}
\caption[Penrose's heuristic argument]{
Assuming the final state conjecture holds, an initial data set $(M,g,k)$ evolves to a slice of Kerr.
In Kerr $\m\ge\frac{|\Sigma|}{16\pi}$ where $\Sigma$ is the intersection of the event horizon with $(M,g,k)$.
Since matter is radiating away to infinity, and due to Hawking's area theorem \cite{Hawking, Wald}, we also have $\m\ge\frac{|\Sigma|}{16\pi}$ on $(M,g,k)$.
Combining the Cosmic Censorship with Penrose's singularity theorem \cite{Penrose2, Wald} allows us to replace the event horizon with the minimal area enclosure of the apparent horizon.
}
\label{F:Penrose}
\end{figure}

  \begin{conjecture}\label{C:Penrose}
Let $(M,g,k)$ be an initial data set satisfying the DEC. 
Let $\Sigma_0$ be a MOTS in $(M,g,k)$, and let $\Sigma$ be the minimal area enclosure of $\Sigma_0$.
Then the mass $\m=\sqrt{E^2-|P|^2}$ of $(M,g,k)$ is bounded from below by
\begin{align*}
    \m\ge \sqrt{\frac{|\Sigma|}{16\pi}}.
\end{align*}
Moreover, we have equality if and only if $(M,g,k)$ is a slice in Schwarzschild spacetime.
 \end{conjecture}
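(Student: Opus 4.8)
The plan is to adapt the Huisken--Ilmanen scheme \cite{HuiskenIlmanen} to initial data sets, with classical IMCF replaced by the spacetime IMCF system \eqref{system} at $a=1$ and Geroch monotonicity replaced by Theorem \ref{Thm Intro}. First I would develop a weak, level-set formulation of \eqref{system} in which $|\nabla u|$ and $|\nabla v|$ are allowed to vanish, so that the double-null foliation can jump across strictly outer-minimizing hulls exactly as in the Riemannian theory; this should require an elliptic regularization adapted to the degenerate operators $\overline\Delta_\pm$ together with a version of Theorem \ref{T:main} valid across jumps (cf.\ Remark \ref{remark1}). The initial surface is taken to be the minimal area enclosure $\Sigma$ of the MOTS $\Sigma_0$. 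On a MOTS one has $\theta_+=0$, hence $\int_{\Sigma_0}\theta_+\theta_-\,dA=0$ and $m_H(\Sigma_0)=\sqrt{|\Sigma_0|/16\pi}$; a minimizing-hull argument as in \cite{HuiskenIlmanen} should transfer the bound $m_H\ge\sqrt{|\cdot|/16\pi}$ from $\Sigma_0$ to $\Sigma$, and the flow is then launched from $\Sigma$ with Dirichlet data $u=c_-,\ v=d_-$ on $\Sigma$ and $u,v$ prescribed on large coordinate spheres.

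Granting existence, the monotonicity follows by integrating the divergence identity \eqref{integral formula} over the region $M_R$ bounded by $\Sigma$ and a large coordinate sphere $S_R$: the first line of \eqref{integral formula} is nonnegative; the second is nonnegative under the DEC $\mu\ge|J|$, since $|\nabla u-\nabla v|\le|\nabla u|+|\nabla v|$; and, after the coarea formula, the third is controlled by Gauss--Bonnet provided the level sets $\Sigma_u,\Sigma_v$ are connected, which, as in \cite{HuiskenIlmanen}, one expects from the minimizing-hull property being preserved along the flow. This gives $m_H(S_R)\ge m_H(\Sigma)$. It then remains to let $R\to\infty$. A direct expansion shows $m_H(S_R)\to E$, the ADM energy, already yielding $E\ge\sqrt{|\Sigma|/16\pi}$; to reach the sharp bound $\m=\sqrt{E^2-|P|^2}$ of Conjecture \ref{C:Penrose} one must instead run the flow with the double-null data asymptotic to a \emph{boosted} pair of Minkowski null functions $r'\pm t'$ (cf.\ Theorem \ref{T:Minkowski}) and optimize over the boost, and it is here that the extra freedom carried by the pair $(u,v)$, rather than a single foliation, is genuinely used.

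For the rigidity statement, equality propagates back through \eqref{integral formula}: one is forced to have $\overline\nabla^2_+u\equiv0$ and $\overline\nabla^2_-v\equiv0$, the DEC saturated with $J$ proportional to $\nabla u-\nabla v$, and $K_u,K_v$ constant on each level set, so the foliations $\Sigma_u,\Sigma_v$ are by round spheres. These are precisely the conditions met by the null functions $r\pm t$ on a slice of a spherically symmetric spacetime, with Theorem \ref{T:Minkowski} as the flat model; reconstructing the spacetime development of $(M,g,k)$ from the double-null foliation and invoking Birkhoff's theorem should then identify it with Schwarzschild, in analogy with how the equality case of the positive mass theorem is pinned down to Minkowski space in \cite{HKK, BKKS}.

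The decisive obstacle is the existence theory for \eqref{system} at $a=1$ in full generality: Theorem \ref{T:PDE} produces solutions only for $a=0$, and the $a=1$ system is solved here only under spherical symmetry, so a genuine proof of Conjecture \ref{C:Penrose} requires a weak formulation of the coupled degenerate system, a regularization that handles the cross terms $\tfrac{|\nabla u||\nabla v|+\langle\nabla u,\nabla v\rangle}{u+v}$, and control of jumps and of the connectedness of $\Sigma_u,\Sigma_v$ in the absence of any monotone quantity attached to $u$ or $v$ individually. A second serious difficulty, new relative to the Riemannian case, is the asymptotic identification of the limiting Hawking mass with $\sqrt{E^2-|P|^2}$ rather than merely with $E$, which forces the use of a tilted asymptotic foliation.
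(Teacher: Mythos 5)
The statement you are proving is Conjecture \ref{C:Penrose}, and the paper does not prove it: it is presented explicitly as an open conjecture, and the author's own results are strictly partial --- a divergence identity (Theorem \ref{T:main}), a monotonicity statement conditional on the existence of solutions with rather special boundary behaviour (Theorem \ref{Thm Intro}, Corollary \ref{Cor: main}), existence only for $a=0$ (Theorem \ref{T:PDE}), and the full Penrose inequality only in spherical symmetry (Theorem \ref{T:Penrose}). Your text is therefore a program, not a proof, and to your credit you say so in the final paragraph. Since there is no proof in the paper to compare against, the only honest assessment is that the gaps you name are genuine and currently unfilled, and that a few of them are more severe than your sketch suggests.

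Concretely: (i) the existence and weak (jump-admitting) theory for the $a=1$ system is completely open outside spherical symmetry, and unlike Riemannian IMCF there is no known variational or $p$-harmonic approximation for the coupled degenerate system, so ``adapt the Huisken--Ilmanen scheme'' is not yet a step one can take. (ii) Corollary \ref{Cor: main} requires \emph{both} boundary surfaces to be level sets of \emph{both} $u$ and $v$ simultaneously; the paper points out that for Minkowski slices such a surface must have constant $t$, so the monotone quantity is only read off on a very thin family of surfaces. This directly undercuts your plan to evaluate the Hawking mass on large coordinate spheres of a boosted foliation and optimize over the boost: there is no argument that the flow reaches such common level sets near infinity, nor that the limit of the spacetime Hawking mass along them is $\m=\sqrt{E^2-|P|^2}$ rather than $E$ (the paper supplies no asymptotic analysis at all). (iii) Connectedness of the level sets $\Sigma_u,\Sigma_v$ is needed for the Gauss--Bonnet step, and since the paper emphasizes that neither foliation carries a monotone quantity individually, the minimizing-hull mechanism that guarantees this in the Riemannian case has no available analogue. (iv) The rigidity discussion (forcing $\overline\nabla^2_\pm=0$ and invoking Birkhoff) is plausible but nowhere established; the paper only records the relevant overdetermined systems for Schwarzschild slices without a classification theorem. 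In short, the proposal correctly reproduces the paper's intended strategy and its motivations, but none of the four steps above is carried out here or in the paper, so the conjecture remains unproven.
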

 Here a marginally outer trapped surfaces (MOTS) is a surface $\Sigma$ satisfying $\theta_+=H+\tr_{g_{\Sigma}}k=0$ and models an apparent horizons.
Moreover, the ADM energy and momentum of $(M,g,k)$ are defined by
 \begin{align*}
E=\lim_{r\rightarrow\infty}\frac{1}{16\pi}\int_{S_{r}}\sum_i \left(g_{ij,i}-g_{ii,j}\right)\upsilon^j dA,\quad\quad
P_i=\lim_{r\rightarrow\infty}\frac{1}{8\pi}\int_{S_{r}} \left(k_{ij}-(\tr_g k)g_{ij}\right)\upsilon^j dA.
\end{align*}

We refer to the books of D.~Lee and R.~Wald \cite{Lee, Wald} for an introduction to GR and to M.~Mars' survey \cite{Mars} for a detailed explanation of R.~Penrose's heuristic.

 A counter example to the Penrose Conjecture would pose a serious challenge to the Cosmic Censorship conjecture which is considered to be the weakest link in the above argument.
Besides its physical significance, the Penrose Conjecture also presents a strengthening of the famous positive mass theorem \cite{AMO, BKKS, Eichmair, EHLS, HKK, HuiskenIlmanen, Li, Miao, SY1, SY2}.
 By time-reversal, i.e. by replacing $k$ with $-k$, one also expects conjecture \ref{C:Penrose} hold also for marginally inner trapped surfaces (MITS), i.e surfaces satisfying $\theta_-=0$.

 We remark that in the statement of the Penrose inequality it is necessary to consider the minimal area enclosure $\Sigma$ instead of the MOTS $\Sigma_0$ itself.
It is easy to construct counterexamples to $m\ge\sqrt{\frac{|\Sigma_0|}{16\pi}}$, see for instance Figure 1 in \cite{HuiskenIlmanen}, and even the assumption of $\Sigma_0$ being an outermost MOTS is insufficient as demonstrated by I.~Ben-Dov in \cite{Ben-Dov}.

  The Penrose Conjecture has been established in the case $k=0$ by G.~Huisken and T.~Ilmanen \cite{HuiskenIlmanen} (for connected horizons), and by H.~Bray \cite{Bray} (for arbitrary horizons).
  H.~Bray's proof employs Bray's conformal flow and has also been generalized up to dimension 7 by H.~Bray and D.~Lee in \cite{BrayLee} and to the electrostatic setting by M.~Khuri, G.~Weinstein and S.~Yamada in \cite{KWY}.

 In the general case $k\ne0$ the conjecture is wild open outside spherical symmetry  \cite{BrayKhuri, BrayKhuri2, Hayard, Hayard2, IMM, MalecMurchadha} and H.~Roesch' result on certain null cones \cite{Roesch}.
In the pioneering work \cite{BrayKhuri} H.~Bray and M.~Khuri proposed three methods to couple Jang's equation.  This leads to several system of PDE which imply the Penrose conjecture for \textit{generalized horizons} (i.e. surfaces satisfying $\theta_+\theta_-=0$\footnote{
 Thus, there have to arise some complications in the existence theory in view of A.~Carrasco and M.~Mars' counter example \cite{CarrascoMars}.}) with $E$ instead of $\m$.

Given that system \eqref{system} with $a=1$ and integral formula \eqref{integral formula} generalizes IMCF including the Hawking mass monotonicity formula, it is natural to ask whether there are applications towards the Penrose conjecture. We obtain:

\begin{theorem}\label{T:Penrose}
Let $(M,g,k)$ be a spherically symmetric initial data set satisfying the DEC, and let $a=1$.
Then system \eqref{system} can be solved, and the integral formula \eqref{integral formula} reduces to the monotonicity formula of the spacetime Hawking energy
\begin{align*}
    \m_H(\Sigma)=\sqrt{\frac{|\Sigma|}{16\pi}}\left(1-\frac1{16\pi}\int_\Sigma\theta_+\theta_-dA\right).
\end{align*}
In particular, the Penrose inequality holds in spherical symmetry, cf. \cite{Hayard}.
\end{theorem}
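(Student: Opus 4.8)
The plan is to reduce everything to a one-dimensional ODE problem by exploiting spherical symmetry, and then to recognize the integrated version of \eqref{integral formula} as the classical Hayward monotonicity of the spacetime Hawking mass. First I would fix a warped-product form for the metric, writing $g = \frac{1}{f(s)}ds^2 + \rho(s)^2 g_{\Sphere^2}$ on $M \cong [s_-,s_+] \times \Sphere^2$, and $k = k_1(s)\,ds^2/f(s) + k_2(s)\rho(s)^2 g_{\Sphere^2}$ so that the radius function is $\rho(s)$ and the area radius of the symmetric sphere $\Sigma_s = \{s\}\times\Sphere^2$ is $\rho$. Since Theorem \ref{T:Minkowski} tells us that in Minkowski the solution is $u=r+t$, $v=r-t$, the natural ansatz is that $u,v$ are functions of $s$ alone, say $u=u(s)$, $v=v(s)$, with $\nabla u = u'(s)\,ds$, $|\nabla u| = \sqrt{f}\,|u'|$, and similarly for $v$; both level-set foliations then coincide with the symmetric foliation $\{\Sigma_s\}$, and $\nu_u = \pm\nu_v = \nu_s$. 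One computes $\theta_\pm$ on $\Sigma_s$ in terms of $\rho,\rho',f,k_2$, and checks that on this ansatz the tensors $(\overline\nabla^2_\pm)$ are pure-trace (their tracefree parts vanish automatically by symmetry), so system \eqref{system} with $a=1$ collapses to the two scalar ODEs $\theta_+\,\sqrt f\,u' = 2\frac{\sqrt f\,u'\sqrt f\,v' + f u'v'}{u+v}$ and the analogue for $v$ — i.e. $\theta_+\,u'(u+v) = 4 f u'v'$ (after using $\langle\nabla u,\nabla v\rangle = f u'v' = |\nabla u||\nabla v|$ when $u',v'>0$), and $\theta_-\,v'(u+v)=4fu'v'$. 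Dividing, $\theta_+ u' = \theta_- v'$, which together with one of the equations gives a first-order system for $(u,v)$ that can be integrated by quadrature once $\theta_\pm$ are known from the geometry; positivity and the $C^{2,\alpha}$ regularity follow from choosing the initial constants appropriately and the smoothness of the data. This establishes solvability.

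For the monotonicity statement I would integrate \eqref{integral formula} over the region $\Omega$ between two symmetric spheres $\Sigma_{s_1}$ and $\Sigma_{s_2}$, $s_1<s_2$. On the left, $\int_\Omega \div Y = \int_{\Sigma_{s_2}} \langle Y,\nu\rangle - \int_{\Sigma_{s_1}}\langle Y,\nu\rangle$; the key computation is to show that $\int_{\Sigma_s}\langle Y,\nu_s\rangle\,dA$ equals $16\pi\, \m_H(\Sigma_s)$ up to an additive constant. This is exactly the spacetime analogue of the Riemannian identity \eqref{mH monotonicity2}, where the boundary flux of $2\nabla|\nabla u| + 2\frac{|\nabla u|}{u}\nabla u - 2\Delta u\frac{\nabla u}{|\nabla u|}$ reproduces $16\pi\,\m_H$; here the extra terms in $Y$ involving $k$, $\tr_g(k)$, and the $\frac{1}{u+v}$ factors are precisely what convert $H^2$ into $\theta_+\theta_-$ and account for the double foliation. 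On the right, the first line of \eqref{integral formula} is non-negative pointwise (it is a sum of squared tracefree-type quantities divided by positive gradient norms — in the spherically symmetric ansatz it in fact vanishes identically, consistent with equality being Schwarzschild); the second line is non-negative by the DEC $\mu\ge|J|$ together with $|\nabla u - \nabla v| \le |\nabla u| + |\nabla v|$; and the third line $-2K_u|\nabla u| - 2K_v|\nabla v|$ integrates, via Gauss--Bonnet on each $\Sigma_s$ (which is a round sphere, $\int_{\Sigma_s}K = 4\pi$) combined with the coarea formula, to a total contribution that is again absorbed into the $\m_H$ difference — this is the standard Geroch-type bookkeeping. Putting the pieces together yields $\m_H(\Sigma_{s_2}) - \m_H(\Sigma_{s_1}) \ge 0$.

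Finally, to extract the Penrose inequality I would run the flow/foliation from a generalized horizon $\Sigma_0$ (where $\theta_+\theta_- = 0$, so $\m_H(\Sigma_0) = \sqrt{|\Sigma_0|/16\pi}$) out to spatial infinity, where $\m_H(\Sigma_s) \to \m_{\mathrm{ADM}}$ of the spherically symmetric slice; monotonicity then gives $\m \ge \sqrt{|\Sigma_0|/16\pi}$, and since in spherical symmetry the minimal area enclosure of $\Sigma_0$ is $\Sigma_0$ itself, this is Conjecture \ref{C:Penrose} in this setting, recovering Hayward \cite{Hayard}. The main obstacle I anticipate is not any single computation but the careful matching of boundary terms: verifying that the flux $\int_{\Sigma_s}\langle Y,\nu_s\rangle\,dA$ plus the integrated Gauss--Bonnet contribution assembles exactly into $16\pi\,\m_H(\Sigma_s)$ with the correct $\sqrt{|\Sigma|}$ prefactor and the $\theta_+\theta_-$ combination requires tracking several terms that individually look unrelated to the Hawking mass, and one must also confirm that the chosen boundary constants $c_\pm,d_\pm$ (or the corresponding initial conditions for the ODE) are compatible with $u,v>0$ throughout and with the horizon condition at $\Sigma_0$. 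A secondary technical point is handling the asymptotic behavior — showing $|\nabla u|,|\nabla v|$ are bounded away from zero along the foliation and that the area radius $\rho \to \infty$ so the flow is complete — but in spherical symmetry this reduces to elementary ODE asymptotics.
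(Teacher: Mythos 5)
Your overall route matches the paper's: reduce to radial ODEs, note that for $a=1$ the equations on the symmetric spheres become $|\nabla v|=\tfrac14\theta_+(u+v)$ and $|\nabla u|=\tfrac14\theta_-(u+v)$, integrate the divergence identity \eqref{integral formula} between two symmetric spheres, and absorb the Gaussian curvature terms via coarea plus Gauss--Bonnet; with the two relations above the flux of $Y$ through a level sphere collapses to a multiple of $\int_\Sigma\theta_+\theta_-\,dA$, giving the inequality of Corollary \ref{Cor: main}. But the step you defer to ``careful matching of boundary terms'' hides the one idea you are actually missing: what you get from this bookkeeping is monotonicity of $(u+v)\bigl(1-\tfrac1{16\pi}\int_\Sigma\theta_+\theta_-\,dA\bigr)$, with the prefactor $(u+v)$, \emph{not} yet of $\m_H$. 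To convert the prefactor into $\sqrt{|\Sigma|/16\pi}$ you must observe that adding the two relations gives $|\nabla(u+v)|=\tfrac12H(u+v)$, i.e.\ $s=u+v$ solves rescaled Riemannian IMCF \eqref{IMCF rescaled}; since rescaled IMCF is uniformly area-expanding, imposing $s(\Sigma_0)=\sqrt{|\Sigma_0|/16\pi}$ forces $u+v=\sqrt{|\Sigma_s|/16\pi}$ along the entire foliation. This is also how the paper solves the system (Lemma \ref{L:Penrose}): it decouples into rescaled IMCF for $u+v$ and the explicit quadrature $v-u=\int\tfrac12(\tr_g(k)-k_{\nu\nu})\,s\,d\rho$, which is equivalent to, but cleaner than, your coupled first-order system.

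A second point you relegate to ``elementary ODE asymptotics'' is in fact a geometric hypothesis: Theorem \ref{T:main} requires $|\nabla u|,|\nabla v|\ne0$, which by the displayed relations is exactly $\theta_\pm>0$ on every symmetric sphere outside $\Sigma_0$. This holds because $\Sigma_0$ is the \emph{outermost} apparent horizon (no MOTS or MITS encloses it), whence $H>|\tr_g(k)-k_{\nu\nu}|$ outside $\Sigma_0$; it is not automatic from smoothness of the data. Finally, a small correction: in the radial ansatz $\overline\nabla^2_\pm$ are diagonal but not pure trace — the content of the $a=1$ equation is precisely that their \emph{tangential} trace vanishes, which is why $|\overline\nabla^2_+u|^2-((\overline\nabla^2_+)_{\nu_u\nu_u}u)^2=0$ in spherical symmetry, as you correctly note. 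With these points supplied, your argument coincides with the paper's.
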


One difficulty most approaches towards the Penrose conjecture face, is to solve the underlying PDE.
For instance, P.~Jang and R.~Wald already showed in \cite{JangWald} that R.~Geroch's monotonicity formula \cite{Geroch} implies the Riemannian Penrose inequality if an existence theory for IMCF can be established.
This has also been observed in the spacetime setting for \emph{Inverse Mean Curvature Vector Flow} by J.~Frauendiener \cite{Frauendiener}.
In the Riemannian case this has been resolved in \cite{AMMO, HuiskenIlmanen, Moser}, but the spacetime case this is still completely open.
Similarly, the approaches by J.~Frauendiener \cite{Frauendiener} and by H.~Bray and M.~Khuri \cite{BrayKhuri} do not yield a proof of the Penrose conjecture due to the difficulties of solving the underlying PDE systems.
Inverse mean curvature vector flow is forward-backward parabolic, the Jang-zero divergence equations are a third order system, Jang-conformal flow is non-local.
Perhaps the most promising of these PDEs is Bray-Khuri's Jang-IMCF.
However, there an existence theory is also out of reach due to the coupling of two PDEs which in their own right are already quite complicated and due to the presence of second order coupling terms.

Jang's equation has been introduced to prove the spacetime PMT \cite{SY2}. However, a simpler proof can be given via spacetime harmonic functions \cite{HKK}. 
There the underlying PDE, the spacetime Laplace equation, has only a very mild non-linearity and its existence theory is much easier than the one for Jang's equation.
Our approach of spacetime IMCF can be viewed as combining spacetime harmonic functions and IMCF.
Analytically, the systems \eqref{system} have the advantage that there are no second-order coupling terms, and there is a simple expression for $\Delta(v-u)$.
This allows us to obtain an existence theory for system \eqref{system} with $a=0$ in full generality without having to assume any symmetry, see Theorem \ref{T:PDE}, which will be proven in the next section.

\subsection{Geometric applications}

Using the divergence identity \eqref{integral formula}, we will show:
 
 \begin{corollary}\label{Cor: main}
 Let $(M,g,k)$ be an annulus satisfying the dominant energy condition with spherical boundary components $\partial_-M$ and $\partial_+M$.
 Suppose $u,v$ are constant on both $\partial_-M$ and $\partial_+M$ and that $u|_{\partial_-M}<u|_{\partial_+M}$, $v|_{\partial_-M}<v|_{\partial_+M}$.
 Then we have under the same assumptions as in Theorem \ref{T:main}
 \begin{align*}
  &(u+v)|_{\partial_+M}\left[1 - \frac1{8\pi}\int_{\partial_+M}\left(2\theta_+\frac{|\nabla u|}{u+v}+2\theta_-\frac{|\nabla v|}{u+v}-8\frac{|\nabla u||\nabla v|}{(u+v)^2}\right)dA\right]\\
  \ge&(u+v)|_{\partial_-M}\left[1 - \frac1{8\pi}\int_{\partial_-M}\left(2\theta_+\frac{|\nabla u|}{u+v}+2\theta_-\frac{|\nabla v|}{u+v}-8\frac{|\nabla u||\nabla v|}{(u+v)^2}\right)dA\right]
\end{align*}
where $\theta_\pm=H\pm \tr_{g_\Sigma}(k)$ are the null expansions.
In the case $a=1$ we furthermore have $|\nabla u|=\frac14\theta_-(u+v)$ and $|\nabla v|=\frac14\theta_+(u+v)$ on $\partial_\pm M$ which implies 
\begin{align*}
  (u+v)|_{\partial_+M}\left(1 - \frac1{16\pi}\int_{\partial_+M}\theta_-\theta_+dA\right)\ge&(u+v)|_{\partial_-M}\left(1 - \frac1{16\pi}\int_{\partial_-M}\theta_-\theta_+dA\right).
\end{align*}
 \end{corollary}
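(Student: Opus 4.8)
The plan is to integrate the divergence identity \eqref{integral formula} over the annulus $M$ and identify the boundary terms. First I would apply the divergence theorem to $Y$ from Theorem \ref{T:main}, writing $\int_M \div Y \, d\mu = \int_{\partial_+ M} \langle Y, \nu\rangle \, dA - \int_{\partial_- M} \langle Y, \nu\rangle \, dA$, where $\nu$ is the outward unit normal (pointing out of $M$, so toward $\partial_+ M$ on that component and away from $M$ on $\partial_- M$). On the right-hand side of \eqref{integral formula}, the first line is non-negative pointwise, the second line is non-negative by the DEC, so the total integral is bounded below by $-2\int_M (K_u |\nabla u| + K_v |\nabla v|)\, d\mu$. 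The key structural input is that, by the coarea formula, $\int_M K_u |\nabla u| \, d\mu = \int K_u \left(\int_{\Sigma_u} dA\right) dt$, and each level set $\Sigma_u$ is a closed surface; since the boundary components are spherical and $u,v$ are constant there, every level set is a topological sphere (no new topology is created in the annulus under the stated regularity assumptions), so Gauss-Bonnet gives $\int_{\Sigma_u} K_u \, dA = 4\pi$, hence $\int_M K_u |\nabla u| \, d\mu = 4\pi\, (u|_{\partial_+M} - u|_{\partial_-M})$ and similarly for $v$. This turns \eqref{integral formula} into the inequality
\begin{align*}
\int_{\partial_+ M} \langle Y, \nu\rangle \, dA - \int_{\partial_- M} \langle Y, \nu\rangle \, dA \ge -8\pi\big((u+v)|_{\partial_+M} - (u+v)|_{\partial_-M}\big).
\end{align*}

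Next I would compute $\langle Y, \nu\rangle$ on a boundary component where $u,v$ are constant, so $\nu = \nu_u = \nu_v$ (both gradients are normal to the common level surface). Here the first term $2\nabla(|\nabla u| + |\nabla v|)$ contributes $2\nu\big(|\nabla u| + |\nabla v|\big)$; the terms $-2\Delta u \frac{\nabla u}{|\nabla u|} - 2\Delta v \frac{\nabla v}{|\nabla v|}$ contribute $-2\Delta u - 2\Delta v$; the term $2k(\nabla(u-v),\cdot)$ pairs with $\nu$ to give $2 k_{\nu\nu}(|\nabla u| - |\nabla v|)$; the $\tr_g(k)$ term gives $-2\tr_g(k)(|\nabla u| - |\nabla v|)$; and the remaining term $4(|\nabla u|\nabla v + |\nabla v|\nabla u)\frac1{u+v}$ gives $\frac{8 |\nabla u||\nabla v|}{u+v}$. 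The standard trick, exactly as in the Riemannian IMCF computation, is to rewrite $\nu(|\nabla u|) - \Delta u$ using the identity $\Delta u = \nabla^2_{\nu\nu} u + H |\nabla u| + \nu(|\nabla u|)$ valid on a regular level set (here $H$ is the mean curvature of the level surface), so that $\nu(|\nabla u|) - \Delta u = -\nabla^2_{\nu\nu}u - H|\nabla u|$. Plugging the PDE \eqref{system} to eliminate $\nabla^2_{\nu\nu} u$ in favor of the lower-order terms appearing in $(\overline\nabla^2_+)_{\nu\nu}u$, and collecting, the boundary integrand should organize into $(u+v)$ times the bracketed expression $\big[\,\cdot\,\big]$ in the statement, after dividing through by $8\pi$; the $\theta_\pm = H \pm \tr_\Sigma k$ appear because $\tr_\Sigma k = \tr_g k - k_{\nu\nu}$ combines the $\tr_g(k)$ and $k_{\nu\nu}$ pieces. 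I would track the constant factors carefully so the $-8\pi$ from Gauss-Bonnet matches the $1$ inside the brackets after division.

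For the final assertion in the case $a=1$: on a boundary component, constancy of $u,v$ forces $\nabla^2_{ij}u$ restricted to the level surface to be (negative) the second fundamental form times $|\nabla u|$, so $(\overline\nabla^2_+)_{\nu_u\nu_u}u = \nabla^2_{\nu\nu}u + k_{\nu\nu}|\nabla u| - \frac{|\nabla u||\nabla v| + \langle\nabla u,\nabla v\rangle}{u+v} + \frac{2\nabla_\nu u \nabla_\nu v}{u+v}$, and the $a=1$ equation $\overline\Delta_+ u = (\overline\nabla^2_+)_{\nu\nu} u$ together with $\overline\Delta_+ u = \Delta u$-type bookkeeping yields the null-IMCF relation $\theta_+|\nabla u| = \frac{2(|\nabla u||\nabla v| + \langle\nabla u,\nabla v\rangle)}{u+v} = \frac{4|\nabla u||\nabla v|}{u+v}$ (using $\nu_u = \nu_v$ so $\langle\nabla u,\nabla v\rangle = |\nabla u||\nabla v|$), hence $\theta_+ (u+v) = 4|\nabla v|$, and symmetrically $\theta_-(u+v) = 4|\nabla u|$; this is the claimed boundary identity. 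Substituting $|\nabla u| = \tfrac14\theta_-(u+v)$ and $|\nabla v| = \tfrac14\theta_+(u+v)$ into the bracketed expression, each of the three terms becomes a multiple of $\theta_+\theta_-$: $2\theta_+ \cdot \tfrac14\theta_- = \tfrac12\theta_+\theta_-$, likewise $2\theta_-\cdot\tfrac14\theta_+ = \tfrac12\theta_+\theta_-$, and $8 \cdot \tfrac{1}{16}\theta_+\theta_- = \tfrac12\theta_+\theta_-$, so the combination $\tfrac12 + \tfrac12 - \tfrac12 = \tfrac12$ gives $\tfrac12\theta_+\theta_-$ per unit, and after the $\tfrac1{8\pi}$ prefactor this is exactly $\tfrac1{16\pi}\theta_+\theta_-$, producing the stated Hawking-mass-type inequality.

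The main obstacle I anticipate is not any single computation but the bookkeeping of signs and constant factors in matching $\langle Y,\nu\rangle$ to the bracketed boundary expression — in particular making sure the outward-normal orientation on $\partial_- M$ versus $\partial_+ M$ is handled consistently, and that the $H|\nabla u|$ and $k_{\nu\nu}$, $\tr_g k$ terms assemble into $\theta_\pm$ with the right coefficients; a secondary subtlety is justifying that all level sets are spheres (so that Gauss-Bonnet contributes $4\pi$ rather than something involving higher genus), which relies on the regularity hypotheses imported from Theorem \ref{T:main} and the topological simplicity of the annulus.
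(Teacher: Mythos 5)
Your overall architecture is exactly the paper's: integrate the divergence identity, discard the non-negative first line and the DEC-controlled second line, convert the Gaussian-curvature terms via the coarea formula and Gauss--Bonnet into $8\pi\bigl((u+v)|_{\partial_+M}-(u+v)|_{\partial_-M}\bigr)$, compute $\langle Y,\nu\rangle$ on the boundary using $\nu_u=\nu_v$, and for $a=1$ read off $|\nabla v|=\tfrac14\theta_+(u+v)$, $|\nabla u|=\tfrac14\theta_-(u+v)$ from the PDE restricted to the boundary. The $a=1$ derivation and the final arithmetic ($\tfrac12+\tfrac12-\tfrac12$ of $\theta_+\theta_-$, then $\tfrac1{8\pi}\cdot\tfrac12=\tfrac1{16\pi}$) are correct and match the paper.

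There is, however, one concrete error in your boundary computation. The identity you invoke, $\Delta u=\nabla^2_{\nu\nu}u+H|\nabla u|+\nu(|\nabla u|)$, double counts: since $\nu=\nabla u/|\nabla u|$ one has $\nu(|\nabla u|)=\nabla^2 u(\nu,\nu)$, and the correct decomposition on a level set where $u$ is constant is $\Delta u=\Delta_\Sigma u+\nabla^2_{\nu\nu}u+H\,\nabla_\nu u=\nu(|\nabla u|)+H|\nabla u|$. Hence $\nu(|\nabla u|)-\Delta u=-H|\nabla u|$ with \emph{no} residual Hessian term, and the $\theta_\pm$ assemble immediately from $-H|\nabla u|$, $k_{\nu\nu}(|\nabla u|-|\nabla v|)$ and $-\tr_g(k)(|\nabla u|-|\nabla v|)$ without ever touching the PDE. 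Your plan to then ``plug the PDE to eliminate $\nabla^2_{\nu\nu}u$'' is a symptom of this slip and would actually derail the first inequality: solving the boundary-restricted equation for $\nabla^2_{\nu\nu}u$ introduces a factor $(1-a)^{-1}$ and $a$-dependent terms, whereas the stated bracketed integrand is manifestly $a$-independent (as it must be, since the first inequality is asserted for all $a\in[0,1]$). Once you replace your identity by the correct one, the boundary integrand organizes into $-\bigl(2\theta_+|\nabla u|+2\theta_-|\nabla v|-8|\nabla u||\nabla v|/(u+v)\bigr)$ and the rest of your argument goes through as in the paper.
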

This immediately yields Theorem \ref{Thm Intro} in case one assumes $(u+v)$ is proportional to the square root of the area of $\partial_\pm M$, also see Remark \ref{remark1}.
The assumption that $\partial_\pm M$ have to be level-sets for both $u$ and $v$ appears to be restrictive at first.
However, it is impossible to obtain a monotonicity formula in the usual sense (such as the one for IMCF) for an energy functional like the Hawking mass due to the possibility of boosts.
For instance, a surface $\Sigma$ contained in an IDS $(M,g,k)$ inside Minkowski space which is a level-set for both $u=r+t$ and $v=r-t$ must have constant $t$.

We begin with the proof of Corollary \ref{Cor: main} before proceeding with the Penrose inequality in spherical symmetry, Theorem \ref{T:Penrose}, and studying arbitrary slices of the Schwarzschild spacetime.

\begin{proof}[Proof of Corollary \ref{Cor: main}]
Theorem \ref{T:main} implies
\begin{align}\label{integral formula a=1}
\begin{split}
\div Y\ge&-2K_u|\nabla u|-2K_v|\nabla v|.
\end{split}
\end{align}
where we used that $(M,g,k)$ satisfies the DEC, and where we recall
\begin{align*}
Y=&2\nabla (|\nabla u|+|\nabla v|)+2k(\nabla (u-v),\cdot)+4(|\nabla u|\nabla v+|\nabla v|\nabla u)\frac1{u+v}\\
&-2\Delta u\frac{\nabla u}{|\nabla u|}-2\Delta v\frac{\nabla v}{|\nabla v|}-2\tr_g(k)\nabla (u-v).
\end{align*}
Since both boundary components $\partial_\pm M$ are level sets for both $u$ and $v$, we have $\nu_u=\nu_v=:\nu$ on $\partial_\pm M$.
Therefore,
\begin{align*}
    Y_\nu=&2\theta_+\frac{|\nabla u|}{u+v}+2\theta_-\frac{|\nabla v|}{u+v}-8\frac{|\nabla u||\nabla v|}{(u+v)^2}.
\end{align*}
Next, we use twice the co-area formula and Gauss-Bonnet's theorem to obtain
\begin{align*}
    \int_M (K_u|\nabla u|+K_v|\nabla v|)dV=\int_{u|_{\partial_-M}}^{u|_{\partial_+M}}4\pi dt+\int_{v|_{\partial_-M}}^{v|_{\partial_+M}}4\pi dt=4\pi(u+v)|_{\partial_-M}-4\pi(u+v)|_{\partial_+M}.
\end{align*}
We emphasize how the co-area formula is applied towards two different foliations.
If $a=1$, we have additionally
\begin{align*}
\Delta u=-\tr_k|\nabla u|+\nabla_{\nu\nu}u+k_{\nu\nu}|\nabla u|+4\frac{|\nabla u||\nabla v|}{u+v}
\end{align*}
on $\partial_\pm M$.
Since $|\nabla u|\ne0$, this implies
\begin{align}\label{theta+-2}
 |\nabla v|=\frac14\theta_+(u+v).
\end{align}
Similarly, $ |\nabla u|=\frac14\theta_-(u+v)$ which finishes the proof.
\end{proof}

In order to prove Theorem \ref{T:Penrose} we first establish the following Lemma.
Again, we set $\nu=\nu_u=\nu_v$.
\begin{lemma}\label{L:Penrose}
Let $(M,g,k)$ be a spherically symmetric initial data set and let $\Sigma_0\subset M$ be the outermost apparent horizon.
Let $s$ be a smooth solution of rescaled IMCF starting from $\Sigma_0$, i.e. $\Delta s=\nabla_{\nu\nu}s+2\frac{|\nabla s|^2}s$ with $s(\Sigma_0)=\sqrt{\frac{|\Sigma_0|}{16\pi}}$.
Outside of $\Sigma_0$, we define the spherically symmetric function $w=w(r)$ via
\begin{align*}
    w(r)=\int_0^r\frac12(\tr_g(k)-k_{\nu\nu})sd\rho
\end{align*}
where $r$ is the distance to $\Sigma_0$.
Then $u,v$, implicitly defined by
\begin{align*}
    u+v=s,\quad\quad v-u=w,
\end{align*}
solve system \eqref{system} for $a=1$.
Moreover, we have
\begin{align}\label{theta+-}
 |\nabla v|=\frac14\theta_+s,\quad\quad
 |\nabla u|=\frac14\theta_-s.
\end{align}
\end{lemma}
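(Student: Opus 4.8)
The plan is to use the fact that spherical symmetry forces all of the relevant level sets to coincide, reducing everything to ordinary differential relations in the radial variable, and then to observe that the rescaled IMCF equation is exactly the identity needed to close up system \eqref{system} at $a=1$. By spherical symmetry $s$ and $w$ depend only on the distance $r$ to $\Sigma_0$, hence so do $u=\tfrac12(s-w)$ and $v=\tfrac12(s+w)$; therefore $\nabla u$, $\nabla v$, $\nabla s$ and $\nabla r$ are all proportional to the radial unit normal $\nu$ of the level spheres $\Sigma_r$, so $\nu_u=\nu_v=\nu$ and the level sets of $u$, $v$ and $s$ all coincide with $\Sigma_r$. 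Writing $H$ for the mean curvature of $\Sigma_r$, I would first record the standard identities $\tr_{\Sigma_r}\nabla^2 f=\Delta f-\nabla^2_{\nu\nu}f=H|\nabla f|$ for any radial $f$, and, from the fundamental theorem of calculus, $\nabla w=\tfrac12(\tr_g(k)-k_{\nu\nu})s\,\nu=\tfrac12\tr_{g_\Sigma}(k)\,s\,\nu$.

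Next I would extract the geometric content of the IMCF equation: substituting $|\nabla s|=s'$ and $\Delta s-\nabla^2_{\nu\nu}s=Hs'$ into $\Delta s=\nabla^2_{\nu\nu}s+2|\nabla s|^2/s$ gives $s'=\tfrac12 Hs$ wherever the flow is strictly expanding, that is throughout the exterior of $\Sigma_0$. Combining this with the expression for $\nabla w$ yields $2\nabla u=(s'-w')\nu=\tfrac12 s\,(H-\tr_{g_\Sigma}(k))\,\nu=\tfrac12 s\,\theta_-\,\nu$ and, similarly, $2\nabla v=\tfrac12 s\,\theta_+\,\nu$. In a spherically symmetric initial data set satisfying the DEC one has $\theta_\pm>0$ outside the outermost apparent horizon, so $s$, $u$, $v$ are positive and have outward-pointing, nonvanishing gradients, and $|\nabla u|=\tfrac14 s\theta_-$, $|\nabla v|=\tfrac14 s\theta_+$; this is \eqref{theta+-}.

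Finally I would check \eqref{system} at $a=1$, equivalently that the tangential traces $\tr_{\Sigma_u}\overline\nabla^2_+u$ and $\tr_{\Sigma_v}\overline\nabla^2_-v$ vanish. Tracing $(\overline\nabla^2_+)_{ij}u$ over $T\Sigma_u$, the Hessian contributes $H|\nabla u|$, the term $k_{ij}|\nabla u|$ contributes $|\nabla u|\tr_{g_\Sigma}(k)$, the $g_{ij}$-term contributes $-2\tfrac{|\nabla u||\nabla v|+\langle\nabla u,\nabla v\rangle}{u+v}$, and the cross term $\tfrac{\nabla_iu\nabla_jv+\nabla_ju\nabla_iv}{u+v}$ is a multiple of $\nu\otimes\nu$ (both gradients being parallel to $\nu$) and so contributes nothing tangentially; using $\langle\nabla u,\nabla v\rangle=|\nabla u||\nabla v|$ and $u+v=s$ the trace reduces to $|\nabla u|\theta_+-\tfrac{4|\nabla u||\nabla v|}{s}$, which vanishes by \eqref{theta+-}. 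The computation for $v$ is identical up to the sign of the $k$-term and gives $|\nabla v|\theta_--\tfrac{4|\nabla u||\nabla v|}{s}=0$. I do not expect the trace computation itself to be the obstacle, since spherical symmetry trivializes the Hessians; the delicate point is the admissibility of $(u,v)$ — verifying $s'>0$ and $\theta_\pm>0$ on the whole exterior region, so that the gradients do not vanish, the functions stay positive, and $\nu_u=\nu_v$ — and in particular the sign of $\theta_-$ on $\Sigma_0$, which is what makes this construction legitimate in the first place.
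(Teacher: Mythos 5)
Your proposal is correct and follows essentially the same route as the paper: derive $|\nabla s|=\tfrac12 Hs$ from the rescaled IMCF equation, combine with $w'=\tfrac12\tr_{g_\Sigma}(k)s$ to get \eqref{theta+-}, and then verify the $a=1$ system via the tangential trace identity (which is exactly the computation the paper cites from its Corollary). The only small discrepancy is that you attribute $\theta_\pm>0$ outside $\Sigma_0$ to the DEC, whereas the lemma does not assume the DEC and the paper deduces this solely from $\Sigma_0$ being the \emph{outermost} apparent horizon, so that no MOTS or MITS lies outside it.
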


\begin{proof}
Since $s$ solves rescaled IMCF, and using $\Delta s=\nabla_{\nu\nu}s+H\nabla_\nu s$, we deduce that $|\nabla s|=\frac12Hs$.
Moreover, we have $|\nabla w|=\frac12|\tr_g(k)-k_{\nu\nu}|s$.
Since $\Sigma_0$ is the outermost horizon and is therefore not enclosed by any MITS or MOTS, we also obtain that $\theta_+,\theta_->0$ for all spherically symmetric surfaces outside $\Sigma_0$ .
This implies $H>|\tr_g(k)-k_{\nu\nu}|$ for all spherically symmetric surfaces outside $\Sigma_0$, and since $u=\frac12(s-w)$ and $v=\frac12(s+w)$ we obtain
\begin{align*}
 |\nabla v|=\frac14\theta_+s,\quad\quad
 |\nabla u|=\frac14\theta_-s.
\end{align*}
Note that this in particular implies $\nabla u,\nabla v\ne0$ outside $\Sigma_0$ as well as $\nabla_ru,\nabla_rv>0$.
Multiplying the above identities by $|\nabla u|,|\nabla v|$, we obtain in the same fashion as in the computation of equation \eqref{theta+-2} that $(u,v)$ solve system \eqref{system} with $a=1$.
This finishes the proof.
\end{proof}

Observe that \eqref{theta+-} implies that the level sets of $u$ move by rescaled $\frac1{\theta_-}$ flow, and the level sets of $v$ move by rescaled $\frac1{\theta_+}$ flow. 
The rescaling factor is in both cases given by $\frac14s$ where $s$ is rescaled IMCF.

The above lemma immediately yields:
\begin{corollary}
We can solve system \eqref{system} for $a=1$ in spherical symmetry.
\end{corollary}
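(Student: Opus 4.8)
The plan is to deduce the statement directly from Lemma \ref{L:Penrose}: that lemma already produces an \emph{explicit} solution $(u,v)$ of system \eqref{system} with $a=1$ out of rescaled inverse mean curvature flow, so the only thing left to supply is the existence of a smooth solution $s$ of rescaled IMCF emanating from the outermost apparent horizon $\Sigma_0$ with the normalization $s(\Sigma_0)=\sqrt{|\Sigma_0|/16\pi}$. First I would observe that in spherical symmetry every leaf of IMCF starting from the rotationally symmetric surface $\Sigma_0$ is again a coordinate sphere, so rescaled IMCF $\Delta s=\nabla_{\nu\nu}s+2\tfrac{|\nabla s|^2}{s}$ collapses to a first-order ODE for $s$ as a function of the area radius. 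Since $\Sigma_0$ is the outermost apparent horizon it is not enclosed by any MOTS or MITS, forcing $H>0$ on all symmetric spheres outside $\Sigma_0$; strict positivity of $H$ is preserved along the flow, so the ODE admits a unique smooth strictly monotone solution $s=s(r)>0$ with $|\nabla s|=\tfrac12 Hs\neq0$ on $M\setminus\Sigma_0$. No weak formulation or jumps are needed here, in contrast to the general Riemannian case; one only has to check smoothness up to $\Sigma_0$ and the correct asymptotics as $r\to\infty$, both of which follow from standard ODE analysis together with asymptotic flatness.

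Next I would feed $s$ into Lemma \ref{L:Penrose}: define the spherically symmetric function
\begin{align*}
    w(r)=\int_0^r\tfrac12(\tr_g(k)-k_{\nu\nu})\,s\,d\rho,
\end{align*}
and set $u=\tfrac12(s-w)$, $v=\tfrac12(s+w)$. Because $\theta_\pm>0$ on every symmetric sphere outside $\Sigma_0$, one has $H>|\tr_g(k)-k_{\nu\nu}|$ there, hence $|\nabla u|,|\nabla v|\neq0$, and $u+v=s>0$; adjusting the lower limit in the definition of $w$ (equivalently adding a constant to $v-u$) arranges $u,v>0$ as well. Lemma \ref{L:Penrose} then gives at once that $(u,v)$ solves system \eqref{system} with $a=1$, which is the claim. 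If desired, one also records $|\nabla v|=\tfrac14\theta_+s$ and $|\nabla u|=\tfrac14\theta_-s$ from \eqref{theta+-}, so that the level sets of $u$ move by rescaled $1/\theta_-$ flow and those of $v$ by rescaled $1/\theta_+$ flow.

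The only genuine content beyond Lemma \ref{L:Penrose} is therefore the first step, and the main obstacle I anticipate is purely the ODE existence/regularity bookkeeping: confirming that the spherically symmetric rescaled IMCF exists for all flow time with the prescribed initial condition, stays smooth across $\Sigma_0$, and yields $u,v$ that are positive with nowhere-vanishing gradient on $M\setminus\Sigma_0$ (and have acceptable behavior at infinity). Once this is in place the conclusion is immediate.
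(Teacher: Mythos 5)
Your proposal is correct and follows the paper's route exactly: the paper derives this corollary as an immediate consequence of Lemma \ref{L:Penrose}, with no further argument written out. The extra step you supply --- reducing rescaled IMCF to an ODE in the area radius and using $\theta_\pm>0$ outside the outermost horizon to guarantee a smooth, strictly monotone $s$ with $|\nabla s|=\tfrac12 Hs\neq0$ --- is a reasonable filling-in of the existence of $s$, which the lemma takes as a hypothesis and the paper leaves implicit.
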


We remark that although system \eqref{system} is in many ways the most complicated for $a=1$ due to its degenerate elliptic character, the existence theory for $a=1$ is substantially simpler than for $a\in[0,1)$ in spherical symmetry.
This contrasts the Riemannian ($k=0$) setting where the existence theory for harmonic functions is elementary compared to the sophisticated existence theory for IMCF \cite{HuiskenIlmanen, Moser}.
The reason for this reverse behavior stems from the fact that the system decouples for $a=1$ in spherical symmetry as demonstrated in Lemma \ref{L:Penrose}.
However, the system appears not to decouple in spherical symmetry for $a\in [0,1)$, and the function $u+v$ is not the rescaling of a $p$-harmonic function.

\begin{proof}[Proof of Theorem \ref{T:Penrose}]
Let $(M,g,k)$ be a spherically symmetric initial data set satisfying the DEC, and let $u,v$ be solutions to system \eqref{system} for $a=1$ outside the horizon $\Sigma_0$ as described in Lemma \ref{L:Penrose}.
As in the proof of Corollary \ref{Cor: main} above, we obtain
\begin{align}\label{eq:cor}
    (u+v)|_{\Sigma_2}\left(1-\frac1{16\pi}\int_{\Sigma_2}\theta_+\theta_-dA\right)\ge     (u+v)|_{\Sigma_1}\left(1-\frac1{16\pi}\int_{\Sigma_1}\theta_+\theta_-dA\right)
\end{align}
for any spherically symmetric surface $\Sigma_2$ enclosing $\Sigma_1$ enclosing $\Sigma_1$.
Since $u+v=s$ solves rescaled IMCF $\Delta s=\nabla_{\nu\nu}s+2\frac{|\nabla s|^2}s$ with $s(\Sigma_0)=\sqrt{\frac{|\Sigma_0|}{16\pi}}$, and because IMCF is uniformly area expanding, we obtain
\begin{align*}
   \sqrt{\frac{|\Sigma_2|}{16\pi}}\left(1-\frac1{16\pi}\int_{\Sigma_2}\theta_+\theta_-dA\right)\ge      \sqrt{\frac{|\Sigma_1|}{16\pi}}\left(1-\frac1{16\pi}\int_{\Sigma_1}\theta_+\theta_-dA\right)
\end{align*}
Hence, the spacetime Hawking energy
\begin{align}
    \m_H(\Sigma_t)=\sqrt{\frac{|\Sigma_t|}{16\pi}}\left(1-\frac1{16\pi}\int_{\Sigma_t} \theta_-\theta_+dA\right)
\end{align}
is monotonically increasing for spherically symmetric initial data sets satisfying the DEC.
This monotonicity then implies the Penrose inequality in this setting, cf. \cite{Hayard}.
\end{proof}

\subsection{Slices in Schwarzschild}
The Penrose inequality can also be viewed as a characterization of the Schwarzschild spacetime.
In the Riemannian case a solution to the overdetermined system
\begin{align}\label{eq Hessian S4}
    \nabla^2_{ij}u=\frac{|\nabla u|^2}ug_{ij}+\frac{\nabla_i u \nabla_j u}{u}
\end{align}
implies that $M$ must be flat Euclidean space, see for instance \cite{AMO}.
If \eqref{eq Hessian S4} holds for all $i,j$ except for $i=j=\nu$, we gain an additional degree of freedom and $M$ must be the $t=0$ slice of Schwarzschild with some arbitrary mass $\m$, cf. \cite{HuiskenIlmanen}.
In fact, the term $|\nabla^2u-\tfrac{|\nabla u|^2}ug_{ij}+\tfrac{\nabla_i u \nabla_j u}{u}|^2-(\nabla_{\nu\nu}u)^2$ appears in the Hawking mass monotonicity formula as shown in Section \ref{new perpsective}.
Taking the trace of equation \eqref{eq Hessian S4} omitting the $(\nu,\nu)$-direction gives (rescaled) IMCF.
Similarly, a proof of the spacetime Penrose inequality is expected to classify arbitrary slices in Schwarzschild via such overdetermined equations.

In our case the situation is quite similar. 
However, instead of having null functions the underlying objects will be null vector fields.

Recall that the Schwarzschild spacetime $(\hat M,\hat g)$ of mass $\m\ge0$ in static coordinates
\begin{align*}
    \hat g=-\phi^2 dt^2+\phi^{-2}dr^2+r^2g_{S^2}
\end{align*}
where $\phi^2=(1-\frac{2\m}r)$.

\begin{theorem}
   Let $T=\phi^2 \nabla t$ and define $X=\nabla r+T$, $Y=\nabla r-T$.
    Then $X,Y,r$ solve 
    \begin{align}\label{eq schwarzschild}
    \hat\nabla_\alpha X_\beta=\hat\nabla_\alpha Y_\beta=\frac{\phi^2}r\hat g_{\alpha\beta}-\frac{X_\alpha Y_\beta+X_\beta Y_\alpha}{2r}
    \end{align}
    for all vectors $\alpha,\beta$ as long as at least one of of these vectors is orthogonal to the $r-t$ plane.
   Moreover, on each spherically symmetric IDS $(M,g,k)$ contained in Schwarzschild $ (\hat M,\hat g)$, the vector field $T$ (and hence $X$, $Y$) are integrable with $X=du$ and $Y=dv$.
\end{theorem}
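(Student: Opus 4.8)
The plan is to prove the two assertions separately: first the ambient Hessian identity \eqref{eq schwarzschild} for $X$, $Y$, and then the integrability statement on spherically symmetric slices. For the first part, I would work directly in the static coordinates $\hat g=-\phi^2dt^2+\phi^{-2}dr^2+r^2g_{S^2}$ with $\phi^2=1-\tfrac{2\m}{r}$. The key point is that $r$ is essentially the areal radius and $T=\phi^2\nabla t$ is (up to normalization) the static Killing field; one computes $\hat\nabla^2 r$ and $\hat\nabla T$ explicitly. Since $\partial_t$ is Killing, $\hat\nabla_\alpha T_\beta$ is antisymmetric, and restricted to directions transverse to the $r$--$t$ plane it should vanish or be controlled by the warping factor $r^2$; similarly the Hessian of $r$ on the $S^2$-factor is $\tfrac{\phi^2}{r}r^2g_{S^2}=\phi^2 r\,g_{S^2}$. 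Assembling $X_{\alpha;\beta}=r_{;\alpha\beta}+T_{\alpha;\beta}$ and using $X=\nabla r+T$, $Y=\nabla r-T$ so that $X_\alpha Y_\beta+X_\beta Y_\alpha=2r_{;\alpha}r_{;\beta}-2T_\alpha T_\beta$ (both $\nabla r$ and $T$ have squared norm $\pm\phi^2$), one checks that the stated combination $\tfrac{\phi^2}{r}\hat g_{\alpha\beta}-\tfrac{1}{2r}(X_\alpha Y_\beta+X_\beta Y_\alpha)$ matches $\hat\nabla_\alpha X_\beta$ whenever at least one of $\alpha,\beta$ is orthogonal to the $r$--$t$ plane (the antisymmetric part of $\hat\nabla T$ is confined to the $r$--$t$ plane, and the symmetric contributions are spherical). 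The same computation with $T\mapsto-T$ gives the identity for $Y$. This is a bounded, mechanical calculation — a couple of Christoffel symbols of Schwarzschild in static coordinates — and I expect no conceptual difficulty there.

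For the second assertion, I would argue as in the proof of Theorem \ref{T:Minkowski}: on a spherically symmetric IDS $(M,g,k)\subset(\hat M,\hat g)$, spherical symmetry forces the slice to be a graph $t=f(r)$ (or more intrinsically, invariant under the $SO(3)$ action), so the future unit normal $N$ lies in the $r$--$t$ plane. Restricting the $1$-form $T^\flat=\phi^2\,dt$ to $M$ gives a spherically symmetric $1$-form, hence of the form $h(r)\,dr$ on the quotient, which is automatically closed — so $T|_M=dv'$ for some function, and likewise $\nabla r|_M$ is the restriction of $d(r|_M)$; but $r|_M$ itself need not literally be $r$ as a function on $M$, so I would instead directly verify $d(X|_M)=0$ and $d(Y|_M)=0$ using that the pullback of any spherically symmetric $1$-form to a $2$-dimensional base (the orbit space) is closed for dimensional reasons. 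Then set $u,v$ to be the primitives, normalized so that $u+v$ recovers the areal radius $r$ and $v-u$ the appropriate potential, matching the structure in Lemma \ref{L:Penrose}.

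The main obstacle I anticipate is the bookkeeping in the second part: one must be careful that $X|_M$ and $Y|_M$ are genuinely \emph{exact} and not merely closed, which requires $M$ (or its orbit space) to have trivial first cohomology — true for an annulus or a disk-type slice, but worth stating as a hypothesis — and one must correctly relate the ambient $\hat\nabla$ to the induced $\nabla$ and the second fundamental form $k$, exactly as in the Minkowski computation where the term $k|\nabla u|$ appeared from $\hat\nabla^2|_{T^\ast M\otimes T^\ast M}u=\nabla^2 u+k\,N(u)$. I would conclude by noting that \eqref{eq schwarzschild}, restricted to $T^\ast M\otimes T^\ast M$ and combined with $N(r),N(v)$ being expressible through $|\nabla u|,|\nabla v|$, yields precisely $\overline\nabla^2_+u=\overline\nabla^2_-v=0$ on slices of Schwarzschild, i.e. the expected rigidity case of system \eqref{system} for $a=1$.
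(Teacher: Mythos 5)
Your proposal is correct and takes essentially the same route as the paper, whose entire proof is the line \enquote{This is a direct computation}: the facts you isolate --- that $T=-\partial_t$ is (minus) the static Killing field so $\hat\nabla T$ is antisymmetric and vanishes whenever an index is spherical, that $\hat\nabla^2 r=\tfrac{\phi^2}{r}\hat g$ on such components, and that $X_\alpha Y_\beta+X_\beta Y_\alpha=2r_{;\alpha}r_{;\beta}-2T_\alpha T_\beta$ has no spherical components --- are exactly the ingredients that make the verification of \eqref{eq schwarzschild} go through, and the pullback/exactness argument on a graphical spherically symmetric slice with trivial $H^1$ settles the integrability claim. No gap.
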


\begin{proof}
This is a direct computation.
\end{proof}

We remark that on any (not necessarily spherically symmetric) initial data set
\begin{align*}
|X||Y|+\langle X,Y\rangle= \hat g(N(u)N,N(v)N)+\langle X,Y\rangle= \hat g( X,Y)=2\phi^2.
\end{align*}
Moreover $\hat \nabla_\alpha X_\beta=\nabla_\alpha X_\beta+k_{\alpha\beta}|X|$.
Hence, equation \eqref{eq schwarzschild} indeed recovers $\overline\nabla^2_+u=0$, and $\overline\nabla^2_-v=0$.

Besides the null vector fields above, there are also globally defined null functions in Schwarzschild which satisfy a system of PDEs for every slice of Schwarzschild:

\begin{proposition}
Consider the functions $u=r^\ast +t$ and $v=r^\ast-t$ in the Schwarzschild spacetime $(\hat M,\hat g)$ where $r^\ast=r+2m\log(\tfrac{r}{2m}-1)$ is the Tortoise coordinate.
Then on any IDS $(M,g,k)\subseteq (\hat M,\hat g)$ (not necessarily spherically symmetric) the restrictions of the functions $u,v$ onto $M$ satisfy
\begin{align}\label{system alternative2}
\begin{split}
\theta_+(\Sigma_u)|\nabla u|=&\phi^2 \frac{|\nabla u||\nabla v|+\langle\nabla u,\nabla v\rangle} {r},\\
\theta_-(\Sigma_v)|\nabla v|=&\phi^2\frac{|\nabla u||\nabla v|+\langle\nabla u,\nabla v\rangle} {r}.
\end{split}
\end{align}
and
\begin{align}\label{hessian alternative}
\begin{split}
\nabla^2u=&-k|\nabla u|+\frac g{2r}\phi^2(|\nabla u||\nabla v|+\langle\nabla u,\nabla v\rangle)\\
&-\frac \m{r^2}\nabla u\otimes \nabla u-\frac {\phi^2}{2r}(\nabla u\otimes \nabla v+\nabla v\otimes \nabla u),\\
\nabla^2v=&-k|\nabla v|+\frac g{2r}\phi^2(|\nabla u||\nabla v|+\langle\nabla u,\nabla v\rangle)\\
&-\frac \m{r^2}\nabla v\otimes \nabla v-\frac {\phi^2}{2r}(\nabla u\otimes \nabla v+\nabla v\otimes \nabla u).
\end{split}
\end{align}
\end{proposition}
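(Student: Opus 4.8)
The plan is to prove two identities in the Schwarzschild spacetime $(\hat M,\hat g)$ that do not refer to any slice, and then restrict them to $(M,g,k)$ through the usual Gauss-type relations. \emph{Step 1 (null coordinates).} Since $\tfrac{dr^\ast}{dr}=\phi^{-2}$, one finds $\hat\nabla r^\ast=\partial_r$ and $\hat\nabla t=-\phi^{-2}\partial_t$, hence $\hat\nabla u=\partial_r-\phi^{-2}\partial_t$ and $\hat\nabla v=\partial_r+\phi^{-2}\partial_t$; both are null, $\hat g(\hat\nabla u,\hat\nabla v)=2\phi^{-2}$, and they lie on opposite sides of the null cone. From $du\otimes dv+dv\otimes du=2\big((dr^\ast)^2-dt^2\big)=2\big(\phi^{-2}dr^2-\phi^2dt^2\big)$ one obtains the double-null form of the metric,
\begin{align*}
\hat g=\tfrac{\phi^2}{2}\big(du\otimes dv+dv\otimes du\big)+r^2g_{S^2},\qquad\text{so that}\qquad r^2g_{S^2}=\hat g-\tfrac{\phi^2}{2}\big(du\otimes dv+dv\otimes du\big).
\end{align*}

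\emph{Step 2 (spacetime Hessians).} A short Christoffel-symbol computation in static coordinates gives $\hat\nabla^2 t=-\tfrac{m}{r^2\phi^2}\big(dt\otimes dr+dr\otimes dt\big)$ and $\hat\nabla^2 r^\ast=-\tfrac{m}{r^2}\,dt\otimes dt-\tfrac{m}{r^2\phi^4}\,dr\otimes dr+r\,g_{S^2}$. Adding these, the $(dt,dr)$-part becomes a perfect square in $dt\pm\phi^{-2}dr=dt\pm dr^\ast$, leaving the clean identities
\begin{align*}
\hat\nabla^2 u=-\tfrac{m}{r^2}\,du\otimes du+r\,g_{S^2},\qquad \hat\nabla^2 v=-\tfrac{m}{r^2}\,dv\otimes dv+r\,g_{S^2},
\end{align*}
which is the Schwarzschild counterpart of $\hat\nabla^2 u=\hat\nabla^2 v=0$ in Minkowski (Theorem~\ref{T:Minkowski}), with the defect accounted for by the mass term and the curvature of the round sphere. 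Inserting $r\,g_{S^2}=\tfrac1r\hat g-\tfrac{\phi^2}{2r}\big(du\otimes dv+dv\otimes du\big)$ then expresses both Hessians in terms of $\hat g$, $du$, $dv$ alone.

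\emph{Step 3 (restriction to $M$).} For a slice $(M,g,k)\subseteq(\hat M,\hat g)$ with unit normal $N$ and $f\in\{u,v\}$, one has $du|_{TM}=d(u|_M)=:\nabla u$, $\hat g|_{TM\otimes TM}=g$, the nullity of $\hat\nabla f$ forces $N(f)^2=|\nabla f|^2$, and $\hat\nabla^2 f|_{TM\otimes TM}=\nabla^2(f|_M)+N(f)\,k$; because $\hat\nabla u$ and $\hat\nabla v$ lie on opposite cones, $N(u)$ and $N(v)$ have opposite signs, which is precisely what gives $\hat g(\hat\nabla u,\hat\nabla v)=\langle\nabla u,\nabla v\rangle+|\nabla u||\nabla v|=2\phi^{-2}$ on $M$. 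Restricting the two identities of Step 2 and using $\tfrac1r=\tfrac{\phi^2}{2r}\big(|\nabla u||\nabla v|+\langle\nabla u,\nabla v\rangle\big)$ to recast the $\tfrac1r\hat g$ term yields \eqref{hessian alternative}. Finally, tracing \eqref{hessian alternative} over $g$ and evaluating it on $(\nu_u,\nu_u)$ gives the mean curvature of $\Sigma_u$ via $H|\nabla u|=\Delta u-\nabla^2 u(\nu_u,\nu_u)$; the ambient $k$-terms reassemble into a multiple of $\tr_{g_\Sigma}(k)|\nabla u|$ that cancels against $\pm\tr_{g_\Sigma}(k)$ in $\theta_\pm=H\pm\tr_{g_\Sigma}(k)$, leaving $\theta_+(\Sigma_u)|\nabla u|=\tfrac2r=\phi^2\tfrac{|\nabla u||\nabla v|+\langle\nabla u,\nabla v\rangle}{r}$, which is \eqref{system alternative2}; the $v$-equation follows by interchanging $u\leftrightarrow v$ and $+\leftrightarrow-$.

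As the paper indicates, this is ultimately a direct computation, and no symmetry of $(M,g,k)$ is used — Steps 1--2 live entirely in the ambient spacetime, which is why the statement holds for every slice (unlike the vector-field theorem just above it). The two points that actually need care are bringing the static-coordinate Hessians of Step 2 into closed form via the perfect-square observation, and the sign bookkeeping in Step 3 — in particular keeping track of whether each of $\hat\nabla u$, $\hat\nabla v$ is future- or past-directed relative to $N$, since this fixes the signs of the $k$-terms entering \eqref{hessian alternative}.
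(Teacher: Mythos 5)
Your route is exactly the ``direct computation'' the paper leaves implicit, and the substantive steps check out: $\hat\nabla r^\ast=\partial_r$, $\hat\nabla t=-\phi^{-2}\partial_t$, the double-null form of $\hat g$, the closed-form Hessians $\hat\nabla^2u=-\tfrac{\m}{r^2}du\otimes du+r\,g_{S^2}$ and $\hat\nabla^2v=-\tfrac{\m}{r^2}dv\otimes dv+r\,g_{S^2}$, the restriction identities $\hat g(\hat\nabla u,\hat\nabla v)=|\nabla u||\nabla v|+\langle\nabla u,\nabla v\rangle=2\phi^{-2}$ and $\hat\nabla^2f|_{TM\otimes TM}=\nabla^2f+N(f)k$, and the trace/$(\nu,\nu)$ manipulation producing the null expansions. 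One cosmetic slip: the intermediate equality $2((dr^\ast)^2-dt^2)=2(\phi^{-2}dr^2-\phi^2dt^2)$ is wrong as written since $(dr^\ast)^2=\phi^{-4}dr^2$; the displayed metric decomposition that follows is nevertheless correct.

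The one point you gloss over is the sign of the extrinsic-curvature term in the $v$-equation. Your own bookkeeping — $\hat\nabla u$ past-directed, $\hat\nabla v$ future-directed, hence $N(u)=+|\nabla u|$ and $N(v)=-|\nabla v|$ for the future normal $N$ — yields
\begin{align*}
\nabla^2v=+k|\nabla v|+\frac{g}{2r}\phi^2\bigl(|\nabla u||\nabla v|+\langle\nabla u,\nabla v\rangle\bigr)-\frac{\m}{r^2}\nabla v\otimes\nabla v-\frac{\phi^2}{2r}\bigl(\nabla u\otimes\nabla v+\nabla v\otimes\nabla u\bigr),
\end{align*}
with a \emph{plus} sign, whereas \eqref{hessian alternative} as printed has $-k|\nabla v|$. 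The plus sign is forced: tracing the printed $v$-equation as you do for $u$ would produce $\theta_+(\Sigma_v)|\nabla v|$ rather than the $\theta_-(\Sigma_v)|\nabla v|$ asserted in \eqref{system alternative2}, and it is also what the Minkowski analogue $\overline\nabla^2_-v=0$ (Theorem \ref{T:Minkowski}) predicts. So you should state explicitly that you obtain the $v$-identity with $+k|\nabla v|$ and that the sign in the statement is a typo, rather than claiming to recover \eqref{hessian alternative} verbatim while simultaneously deriving $\theta_-(\Sigma_v)$ — as written, those two claims of yours are mutually inconsistent. With that correction made explicit, the proof is complete.
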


\begin{proof}
    This is another direct computation.
\end{proof}

Finally, we would like to point out the importance of equations such as \eqref{hessian alternative} lies in the fact that they can be used to characterize slices in certain spacetimes.
See for instance \cite{HirschZhang, HKK} for slices in Minkowski space and Proposition 2 in J.~Krohn's paper \cite{Krohn} for slices of Schwarzschild.


\section{PDE existence theory}\label{S:PDE}

Throughout this section let $(M,g,k)$ be a compact $3$-dimensional initial data set whose boundary has two connected components $\partial_\pm M$, and let $c_\pm, \, d_\pm\in\mathbb R$ be positive constants 

To solve the system 
 \begin{align}\label{eq:main}
 \begin{split}
\Delta u=&-\tr_g(k)|\nabla u|+\frac{3|\nabla u||\nabla v|+\langle \nabla u,\nabla v\rangle}{u+v},\\
\Delta v=&\tr_g(k)|\nabla v|+\frac{3|\nabla u||\nabla v|+\langle \nabla u,\nabla v\rangle}{u+v}
\end{split}
\end{align}
on $M$ with $u=c_\pm$ on $\partial_\pm M$ and $v=d_\pm$ on $\partial_\pm M$, we will first obtain uniform estimates for the system
\begin{align}\label{eq:epsilon}
  \begin{split}
\Delta u_{\sigma,\varepsilon}=&-\sigma\tr_g(k)|\nabla u_{\sigma,\varepsilon}|+\frac{3|\nabla u_{\sigma,\varepsilon}||\nabla v_{\sigma,\varepsilon}|+\langle \nabla u_{\sigma,\varepsilon},\nabla v_{\sigma,\varepsilon}\rangle}{|u_{\sigma,\varepsilon}+v_{\sigma,\varepsilon}|+\varepsilon},\\
\Delta v_{\sigma,\varepsilon}=&\sigma\tr_g(k)|\nabla v_{\sigma,\varepsilon}|+\frac{3|\nabla u_{\sigma,\varepsilon}||\nabla v_{\sigma,\varepsilon}|+\langle \nabla u_{\sigma,\varepsilon},\nabla v_{\sigma,\varepsilon}\rangle}{|u_{\sigma,\varepsilon}+v_{\sigma,\varepsilon}|+\varepsilon}
\end{split}
\end{align}
with Dirichlet boundary conditions
\begin{align}\label{eq:epsilon2}
    \begin{split}
         u_{\sigma,\varepsilon}=&c_\pm\quad\quad\text{and}\quad\quad  v_{\sigma,\varepsilon}=\sigma d_\pm+(1-\sigma)c_\pm-\varepsilon \quad\quad\text{on $\partial_\pm M$}.
    \end{split}
\end{align}
Here $\sigma\in[0,1]$, and $\varepsilon>0$ is sufficiently small such that $\sigma d_\pm+(1-\sigma)c_\pm-\varepsilon>0$ for all $\sigma\in[0,1]$.
Without loss of generality we assume that $c_-< c_+$ and $d_-<d_+$.

\begin{lemma}\label{mp lemma}
Suppose $u_{\sigma,\varepsilon}\in C^{2,\alpha}(M)$ and $v_{\sigma,\varepsilon}\in C^{2,\alpha}(M)$ solve system \eqref{eq:epsilon}, \eqref{eq:epsilon2}. Then we have
\begin{align*}
   c_-\le u_{\sigma,\varepsilon}\le c_+,\quad\quad\quad\quad \sigma d_-+(1-\sigma)c_--\varepsilon\le v_{\sigma,\varepsilon}\le \sigma d_++(1-\sigma)c_+-\varepsilon
\end{align*}
on $M$.
\end{lemma}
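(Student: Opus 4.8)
The plan is to prove each of the four one-sided bounds by the maximum principle, treating the system as two quasilinear second-order equations whose only coupling at top order is through the common denominator $|u_{\sigma,\varepsilon}+v_{\sigma,\varepsilon}|+\varepsilon$, which is a zeroth-order, bounded, non-negative quantity. Write $u=u_{\sigma,\varepsilon}$, $v=v_{\sigma,\varepsilon}$ for brevity, and set $Q = \dfrac{3|\nabla u||\nabla v|+\langle\nabla u,\nabla v\rangle}{|u+v|+\varepsilon}$, noting $Q\ge 0$ by Cauchy--Schwarz ($3|\nabla u||\nabla v|+\langle\nabla u,\nabla v\rangle \ge 3|\nabla u||\nabla v|-|\nabla u||\nabla v| = 2|\nabla u||\nabla v|\ge 0$). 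So the equations read $\Delta u = -\sigma\tr_g(k)|\nabla u| + Q$ and $\Delta v = \sigma\tr_g(k)|\nabla v| + Q$.

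\textbf{Upper bound for $u$.} Consider the operator $L w := \Delta w + \sigma\tr_g(k)|\nabla w| \cdot (\text{something})$; more precisely, at an interior maximum point $x_0$ of $u$ we have $\nabla u(x_0)=0$ and $\Delta u(x_0)\le 0$, hence at $x_0$ the right-hand side $-\sigma\tr_g(k)|\nabla u| + Q = Q \ge 0$, which forces $\Delta u(x_0)=0$ and gives no contradiction directly — so the naive pointwise argument is not quite enough, and one should instead phrase $u$ as a subsolution of a linear operator with no zeroth-order term. Rewrite $\Delta u + b\cdot\nabla u = Q \ge 0$ where $b$ is the (bounded, since $|\nabla u|$ is continuous on the compact $M$ and we are working with a fixed $C^{2,\alpha}$ solution) vector field $b = \sigma\tr_g(k)\,\nabla u/|\nabla u|$ away from critical points, extended by $0$ where $\nabla u = 0$; more robustly, absorb the gradient term into the quasilinear structure and apply the maximum principle for $\Delta u + \langle b,\nabla u\rangle \ge 0$ with bounded measurable $b$. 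Since there is no zeroth-order term, the weak maximum principle (e.g.\ Gilbarg--Trudinger Thm 3.1, valid for bounded drift) gives $\sup_M u = \max_{\partial M} u = \max\{c_-,c_+\} = c_+$.

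\textbf{Lower bound for $u$.} Symmetrically, $\Delta u + \langle b,\nabla u\rangle = Q \ge 0$ says $u$ is a subsolution, so we only directly get the upper bound this way; for the lower bound we instead note the right-hand side is \emph{not} sign-definite in the needed direction, so the cleanest route is: since $Q\ge 0$, $u$ is a supersolution of $\Delta u + \langle b,\nabla u\rangle = 0$ only if $Q\le 0$, which fails. The correct observation is that $Q\ge 0$ makes $u$ a subsolution of the homogeneous linear equation, hence $u\le c_+$; and separately $v$ a subsolution, hence $v\le \sigma d_+ + (1-\sigma)c_+ - \varepsilon$. For the lower bounds one uses that $Q \ge 0$ combined with the \emph{opposite} viewpoint: consider instead that at an interior minimum of $u$, $\nabla u = 0$ and $\Delta u \ge 0$; the equation gives $\Delta u = Q = 3|\nabla u||\nabla v|/(\ldots) \ge 0$ with equality forced only if $\nabla u = 0$ there — which it is. This still gives no contradiction, so one must use the \emph{strong} maximum principle / Hopf lemma: if $u$ attains its minimum over $M$ at an interior point, then since $u$ satisfies a linear homogeneous equation $\Delta u + \langle b,\nabla u\rangle - c\,u = 0$ with $c = 0$ (wait — $Q$ is not of the form $cu$). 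The honest fix, and the one I would write: set $\underline c = c_-$ and observe $\Delta(u - \underline c) + \langle b, \nabla(u-\underline c)\rangle = Q \ge 0$ makes $u - \underline c$ a subsolution vanishing-or-positive on the boundary; that is the wrong sign for a lower bound. Therefore the lower bound must come from rewriting: $-\Delta u - \langle b,\nabla u\rangle = -Q \le 0$, i.e.\ $u$ is a \emph{supersolution} of $-\Delta - \langle b,\nabla\cdot\rangle$, hence $\inf_M u = \min_{\partial M} u = c_-$. \emph{That} is the correct bookkeeping, and the analogous statement for $v$ gives $v \ge \sigma d_- + (1-\sigma)c_- - \varepsilon$.

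The main obstacle, and the one point deserving care in the write-up, is justifying the maximum principle despite the coefficient $b = \sigma\tr_g(k)\,\nabla u/|\nabla u|$ being a priori only bounded and measurable (it can be discontinuous across the critical set $\{\nabla u = 0\}$): one handles this either by the strong/weak maximum principle for operators with bounded measurable drift and no zeroth-order term, or by a standard perturbation argument replacing $|\nabla u|$ with $\sqrt{|\nabla u|^2 + \delta}$, deriving the bound uniformly in $\delta$, and letting $\delta\to 0$. A secondary point is confirming $Q\ge 0$, which as noted follows from Cauchy--Schwarz. Once these are in place the four inequalities $c_- \le u \le c_+$ and $\sigma d_- + (1-\sigma)c_- - \varepsilon \le v \le \sigma d_+ + (1-\sigma)c_+ - \varepsilon$ follow, using that $c_- < c_+$ and $d_- < d_+$ identify which boundary value is the max and which the min.
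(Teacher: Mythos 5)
Your observation that $Q:=\frac{3|\nabla u||\nabla v|+\langle\nabla u,\nabla v\rangle}{|u+v|+\varepsilon}\ge 0$ is correct, and the upper bounds you derive from it (viewing $u$ and $v$ as subsolutions of a drift-Laplace operator with no zeroth-order term) are fine, as is your handling of the measurable drift coefficient. The paper itself offers no detail here --- its proof is the single sentence that the lemma ``follows immediately from the maximum principle'' --- so the burden of making the argument precise is exactly where you put it.

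However, your lower-bound argument has a genuine gap. Rewriting $\Delta u+\langle b,\nabla u\rangle=Q\ge 0$ as $-\Delta u-\langle b,\nabla u\rangle=-Q\le 0$ and calling $u$ a ``supersolution of $-\Delta-\langle b,\nabla\cdot\rangle$'' is only a sign flip of the same inequality: the operator $-\Delta-\langle b,\nabla\cdot\rangle$ has negative-definite principal part, and unwinding the convention returns you to the subsolution condition $\Delta u+\langle b,\nabla u\rangle\ge 0$, which yields only the upper bound. As written, no lower bound follows. The missing idea is that the coupling term is itself proportional to the gradient: away from critical points,
\begin{align*}
Q=|\nabla u|\cdot\frac{3|\nabla v|+\langle\nu_u,\nabla v\rangle}{|u+v|+\varepsilon},
\end{align*}
and the second factor is bounded (by $4\|\nabla v\|_{L^\infty}/\varepsilon$ for the fixed $C^{2,\alpha}$ solution). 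Hence the \emph{entire} right-hand side of the $u$-equation can be absorbed into the drift, giving $\Delta u-\langle B,\nabla u\rangle=0$ for a bounded measurable vector field $B$ supported where $\nabla u\ne 0$. Since $u$ is then an exact solution (not merely a subsolution) of a homogeneous linear equation with no zeroth-order term, both the weak maximum and minimum principles apply, yielding $\min_{\partial M}u\le u\le\max_{\partial M}u$, i.e.\ $c_-\le u\le c_+$; the same absorption in the $v$-equation gives both bounds for $v$. With that correction your write-up closes all four inequalities.
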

 
 \begin{proof}
This follows immediately from the maximum principle for elliptic equations.
 \end{proof}
 
 \begin{lemma}
 Suppose $u_{\sigma,\varepsilon}\in C^{2,\alpha}(M)$ and $v_{\sigma,\varepsilon}\in C^{2,\alpha}(M)$ solve system \eqref{eq:epsilon}, \eqref{eq:epsilon2}.
 Then there exists a constant $C$ independent of $\sigma, u_\varepsilon,v_\varepsilon,\varepsilon$ such that 
 \begin{align*}
     \|u_{\sigma,\varepsilon}\|_{W^{2,p}(M)}+\|v_{\sigma,\varepsilon}\|_{W^{2,p}(M)}\le C.
 \end{align*}
 \end{lemma}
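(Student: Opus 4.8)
The plan is to establish the $W^{2,p}$ bound via the standard Calder\'on--Zygmund estimate for the Laplacian, so the real content is a uniform $C^1$ (in fact $C^{0,\alpha}$ then gradient) bound on $u_{\sigma,\varepsilon}$ and $v_{\sigma,\varepsilon}$. We already have a uniform $C^0$ bound from Lemma \ref{mp lemma}, and crucially the denominator $|u_{\sigma,\varepsilon}+v_{\sigma,\varepsilon}|+\varepsilon$ is bounded below by $c_-+ \sigma d_- + (1-\sigma)c_- -\varepsilon$, which (after possibly shrinking $\varepsilon$) is uniformly positive; this is exactly the point of the $\varepsilon$-regularization and of the boundary data choice. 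So the zeroth-order structure is harmless and the right-hand sides are of the schematic form $a(x)|\nabla u||\nabla v| + b(x)\langle\nabla u,\nabla v\rangle + c(x)|\nabla u| + d(x)|\nabla v|$ with coefficients uniformly bounded in terms of $k$, $g$, the $c_\pm,d_\pm$ and a positive lower bound for $u+v$.

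First I would obtain a uniform gradient bound. The natural approach is a Bochner/maximum-principle argument applied to $w = \tfrac12(|\nabla u_{\sigma,\varepsilon}|^2 + |\nabla v_{\sigma,\varepsilon}|^2)$ (or to each factor separately), following the template used for spacetime harmonic functions in \cite{HKK} and for the $p$-Laplacian-type equations in \cite{AMMO}. Differentiating the equations and contracting, the dangerous terms are the quadratic-in-gradient right-hand sides, but these produce contributions that are at most quadratic in $|\nabla u|,|\nabla v|$ with bounded coefficients — i.e. of the same order as the terms one controls in the classical interior gradient estimate for equations with quadratic gradient growth (Ladyzhenskaya--Uraltseva / Bernstein). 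Because there are \emph{no second-order coupling terms} — each equation is $\Delta u = (\text{lower order in } u, \text{ first order in } v)$ — the Bochner argument for $u$ sees $v$ only through bounded zeroth- and first-order data, so one can run the two estimates essentially in tandem: bound $\sup|\nabla u|$ in terms of $\sup|\nabla v|$ and the $C^0$ data, and symmetrically, then absorb. For the \emph{interior} bound this is routine; the boundary gradient bound requires a barrier construction at $\partial_\pm M$, using that the boundary data are constants on each component so that one can build radial barriers from the distance function, again with the lower bound on $u+v$ keeping the coefficients tame.

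Given $\|u_{\sigma,\varepsilon}\|_{C^1} + \|v_{\sigma,\varepsilon}\|_{C^1}\le C$ uniformly, the right-hand sides of \eqref{eq:epsilon} are bounded in $L^\infty(M)$ uniformly in $\sigma,\varepsilon$, and the boundary data are smooth and uniformly bounded, so global $L^p$ elliptic regularity (Agmon--Douglis--Nirenberg, e.g. Theorem 9.13 in Gilbarg--Trudinger applied on $M$ with its smooth boundary) gives $\|u_{\sigma,\varepsilon}\|_{W^{2,p}(M)} + \|v_{\sigma,\varepsilon}\|_{W^{2,p}(M)} \le C(p)$ for every $p<\infty$, with $C(p)$ independent of $\sigma,\varepsilon$. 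This is the desired estimate.

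The main obstacle is the uniform \emph{gradient} estimate, and within it specifically controlling the quadratic gradient terms $|\nabla u||\nabla v|$ and $\langle\nabla u,\nabla v\rangle$ in the Bochner identity without a smallness hypothesis: one must check that after differentiating, these generate only terms bounded by $C(1+|\nabla u|^2+|\nabla v|^2)$ (not a higher power), so that the standard cutoff-and-maximum-principle machinery closes. I expect this to work precisely because the nonlinearity is exactly quadratic in the gradient with bounded coefficients — the borderline case where interior gradient estimates are classical — but the bookkeeping, together with the boundary barriers on the annulus, is where the work lies. The decoupling at second order is what makes the two-function system no harder than two scalar equations here.
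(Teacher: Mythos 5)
Your reduction of the lemma to a uniform gradient bound is sound in outline, and the final step (bounded right-hand side plus Agmon--Douglis--Nirenberg gives $W^{2,p}$) is fine, but the gradient bound itself is exactly where your argument does not close, and it is not the route the paper takes. The nonlinearity $\frac{3|\nabla u||\nabla v|+\langle\nabla u,\nabla v\rangle}{|u+v|+\varepsilon}$ has critical (quadratic) gradient growth, and after differentiating in the Bochner identity it produces terms of order $|\nabla u|\,|\nabla v|\,|\nabla^2 u|$, which upon absorption into $|\nabla^2u|^2$ leave \emph{quartic} gradient terms. For a single scalar equation this borderline case is handled by the Ladyzhenskaya--Uraltseva/Bernstein machinery only because one can exploit a substitution or an auxiliary factor $e^{\lambda u}$ tied to the specific structure $b(u)|\nabla u|^2$; for a genuinely coupled \emph{system} with cross terms $|\nabla u||\nabla v|$ this machinery does not apply off the shelf, and your proposed ``bound $\sup|\nabla u|$ in terms of $\sup|\nabla v|$ and absorb'' is circular at the quadratic level: the two inequalities you would obtain are superlinear in each other and cannot be absorbed. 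This is a genuine gap, not bookkeeping.

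The paper avoids pointwise gradient estimates altogether. It changes variables to $w_{\sigma,\varepsilon}=v_{\sigma,\varepsilon}-u_{\sigma,\varepsilon}$ and $h_{\sigma,\varepsilon}=(u_{\sigma,\varepsilon}+v_{\sigma,\varepsilon}+\varepsilon)^{-1}$. Then $w$ satisfies \eqref{w eq}, whose right-hand side has only \emph{linear} gradient growth, while for $h$ the quadratic part of the nonlinearity nearly cancels against the $|\nabla(u+v)|^2$ term generated by inverting, leaving $|\Delta h_{\sigma,\varepsilon}|\le C|\nabla w_{\sigma,\varepsilon}|^2+C$ as in \eqref{h eq} (this is the same mechanism by which $\Delta u=2|\nabla u|^2/u$ linearizes under $u\mapsto 1/u$). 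With the $L^\infty$ bounds of Lemma \ref{mp lemma}, the Calder\'on--Zygmund estimates for $w$ and $h$ are then coupled only through $\|\nabla h\|_{L^p}$ and $\|\nabla w\|^2_{L^{2p}}$, and the Gagliardo--Nirenberg interpolation inequalities (with exponent $\delta=\frac{p-3}{2p-3}<1$) let one absorb and close the loop in $W^{2,p}$ directly. If you want to salvage your approach, you would need to first discover this $(w,h)$ structure anyway; without it, the uniform gradient estimate you posit as the ``main obstacle'' is not merely laborious but unproven, and is precisely the content of the lemma.
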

 
 \begin{proof}
 To prove this statement, it will be helpful to rewrite the above system in terms of 
 \begin{align*}
     w_{\sigma,\varepsilon}=v_{\sigma,\varepsilon}-u_{\sigma,\varepsilon},\quad\quad \text{and}\quad\quad h_{\sigma,\varepsilon}=\frac1{u_{\sigma,\varepsilon}+v_{\sigma,\varepsilon}+\epsilon}.
 \end{align*} 
We compute
 \begin{align}\label{w eq}
     \Delta w_{\sigma,\varepsilon}=\sigma\tr_g(k)(|\nabla u_{\sigma,\varepsilon}|+|\nabla v_{\sigma,\varepsilon}|),
 \end{align}
and 
 \begin{align*}
   \frac12 h_{\sigma,\varepsilon}^{-2}\Delta h_{\sigma,\varepsilon}=&-\frac12\Delta (u_{\sigma,\varepsilon}+v_{\sigma,\varepsilon}+\epsilon)+\frac{|\nabla (u_{\sigma,\varepsilon}+v_{\sigma,\varepsilon}+\epsilon)|^2}{u_{\sigma,\varepsilon}+v_{\sigma,\varepsilon}+\epsilon}\\
   =&\frac1{u_{\sigma,\varepsilon}+v_{\sigma,\varepsilon}+\epsilon}(3|\nabla u_{\sigma,\varepsilon}|^2-3|\nabla u_{\sigma,\varepsilon}||\nabla u_{\sigma,\varepsilon}+w_{\sigma,\varepsilon}|+3\langle \nabla u_{\sigma,\varepsilon},\nabla w_{\sigma,\varepsilon}\rangle +|\nabla w_{\sigma,\varepsilon}|^2)\\
   &+\sigma\frac {\tr_g(k)}2|\nabla u_{\sigma,\varepsilon}|-\sigma\frac {\tr_g(k)}2|\nabla u_{\sigma,\varepsilon}+w_{\sigma,\varepsilon}|.
 \end{align*}
 Using the identity 
 \begin{align*}|\nabla (u_{\sigma,\varepsilon}+w_{\sigma,\varepsilon})|-|\nabla u_{\sigma,\varepsilon}|=\frac1{|\nabla (u_{\sigma,\varepsilon}+w_{\sigma,\varepsilon})|+|\nabla u_{\sigma,\varepsilon}|}(|\nabla w_{\sigma,\varepsilon}|^2+2\langle \nabla u_{\sigma,\varepsilon},\nabla w_{\sigma,\varepsilon}\rangle),
 \end{align*}
 we obtain
  \begin{align}\label{h eq}
  \begin{split}
   &\frac12 h_{\sigma,\varepsilon}^{-2}\Delta h_{\sigma,\varepsilon}  \\
   =&-\frac1{u_{\sigma,\varepsilon}+v_{\sigma,\varepsilon}+\epsilon}\frac{3|\nabla u_{\sigma,\varepsilon}|}{|\nabla (u_{\sigma,\varepsilon}+v_{\sigma,\varepsilon})|+|\nabla u_{\sigma,\varepsilon}|}(|\nabla w_{\sigma,\varepsilon}|^2+2\langle \nabla u_{\sigma,\varepsilon},\nabla w_{\sigma,\varepsilon}\rangle)\\
   &+\frac1{u_{\sigma,\varepsilon}+v_{\sigma,\varepsilon}+\epsilon}\left(3\langle \nabla u_{\sigma,\varepsilon},\nabla w_{\sigma,\varepsilon}\rangle +|\nabla w_{\sigma,\varepsilon}|^2\right)\\
   &-\sigma\frac {\tr_g(k)}2\frac1{|\nabla (u_{\sigma,\varepsilon}+w_{\sigma,\varepsilon})|+|\nabla u_{\sigma,\varepsilon}|}(|\nabla w_{\sigma,\varepsilon}|^2+2\langle \nabla u_{\sigma,\varepsilon},\nabla w_{\sigma,\varepsilon}\rangle)\\
   =&\frac1{u_{\sigma,\varepsilon}+v_{\sigma,\varepsilon}+\epsilon}\left(-3|\nabla u_{\sigma,\varepsilon}|\frac1{|\nabla (u_{\sigma,\varepsilon}+w_{\sigma,\varepsilon})|+|\nabla u_{\sigma,\varepsilon}|}|\nabla w_{\sigma,\varepsilon}|^2 +|\nabla w_{\sigma,\varepsilon}|^2\right)\\
   &+\frac1{u_{\sigma,\varepsilon}+v_{\sigma,\varepsilon}+\epsilon}3\frac{\langle \nabla u_{\sigma,\varepsilon},\nabla w_{\sigma,\varepsilon}\rangle}{(|\nabla (u_{\sigma,\varepsilon}+w_{\sigma,\varepsilon})|+|\nabla u_{\sigma,\varepsilon}|)^2}(|\nabla w_{\sigma,\varepsilon}|^2+2\langle \nabla u_{\sigma,\varepsilon},\nabla w_{\sigma,\varepsilon}\rangle) \\
   &-\sigma\frac {\tr_g(k)}2\frac1{|\nabla (u_{\sigma,\varepsilon}+w_{\sigma,\varepsilon})|+|\nabla u_{\sigma,\varepsilon}|}(|\nabla w_{\sigma,\varepsilon}|^2+2\langle \nabla u_{\sigma,\varepsilon},\nabla w_{\sigma,\varepsilon}\rangle).
   \end{split}
 \end{align}
 Next, we estimate equation \eqref{w eq}
  \begin{align}\label{eq:w}
  \begin{split}
     |\Delta w_{\sigma,\varepsilon}|\le& C|\nabla (u_{\sigma,\varepsilon}+v_{\sigma,\varepsilon})|+C|\nabla (u_{\sigma,\varepsilon}-v_{\sigma,\varepsilon})|\\
     =& C|\nabla h_{\sigma,\varepsilon}|h_{\sigma,\varepsilon}^{-2}+C|\nabla w_{\sigma,\varepsilon}|\\
     \le &
     C|\nabla h_{\sigma,\varepsilon}|+C|\nabla w_{\sigma,\varepsilon}|
     \end{split}
 \end{align}
 where $C$ is depending on $(M,g,k),c_\pm,d_\pm$, and whose value may change from line to line.
Moreover, using equation \eqref{h eq} and Lemma \ref{mp lemma} 
 \begin{align}\label{eq:h}
     |\Delta h_{\sigma,\varepsilon}|\le & Ch_{\sigma,\varepsilon}^{3}|\nabla w_{\sigma,\varepsilon}|^2+Ch_{\sigma,\varepsilon}^{2}|\nabla w_{\sigma,\varepsilon}|
     \le 
     C|\nabla w_{\sigma,\varepsilon}|^2+C
 \end{align}
 where we used 
 \begin{align*}
     \frac{|\nabla w_{\sigma,\varepsilon}|}{|\nabla (u_{\sigma,\varepsilon}+w_{\sigma,\varepsilon})|+|\nabla u_{\sigma,\varepsilon}|}\le1.
 \end{align*}
According to the $W^{2,p}$ estimate for elliptic equations,
 \begin{align*}
     \|w_{\sigma,\varepsilon}\|_{W^{2,p}(M)}\le  C+C\|h_{\sigma,\varepsilon}\|_{W^{1,p}(M)}
 \end{align*}
 and
  \begin{align*}
     \|h_{\sigma,\varepsilon}\|_{W^{2,p}(M)}\le C+C\|w_{\sigma,\varepsilon}\|^2_{W^{1,2p}(M)}.
 \end{align*}
By the Gagliardo-Nirenberg interpolation inequality and the $L^\infty$-bounds for $w_{\sigma, \varepsilon},  h_{\sigma,\varepsilon}$, we have
 \begin{align*}
     \|\nabla w_{\sigma,\varepsilon}\|^2_{L^{2p}(M)}\le C\|\nabla^2w_{\sigma,\varepsilon}\|_{L^p(M)}\|w_{\sigma,\varepsilon}\|_{L^\infty(M)}+C\|w_{\sigma,\varepsilon}\|_{L^\infty(M)}\le C\|\nabla^2w_{\sigma,\varepsilon}\|_{L^p(M)}+C
 \end{align*}
and
 \begin{align*}
     \|\nabla h_{\sigma,\varepsilon}\|_{L^{p}(M)}\le C\|\nabla^2h_{\sigma,\varepsilon}\|_{L^p(M)}^\delta\|h_{\sigma,\varepsilon}\|_{L^\infty(M)}^{1-\delta}+C\|w_{\sigma,\varepsilon}\|_{L^\infty(M)}\le C\|\nabla^2h_{\sigma,\varepsilon}\|_{L^p(M)}^\delta+C
 \end{align*}
where 
\begin{align*}
    \delta
    =\frac{p-3}{2p-3}<1.
\end{align*}
Consequently,
  \begin{align*}
     \|h_{\sigma,\varepsilon}\|_{W^{2,p}(M)}\le& C+C\|w_{\sigma,\varepsilon}\|_{W^{2,p}(M)}\\\le& C+C\|h_{\sigma,\varepsilon}\|_{W^{1,p}(M)}\\
     \le& C+C\|h_{\sigma,\varepsilon}\|_{W^{2,p}(M)}^{\delta}\le C+\frac12 \|h_{\sigma,\varepsilon}\|_{W^{2,p}(M)}.
 \end{align*}
Thus, we have $\|h_{\sigma,\varepsilon}\|_{W^{2,p}(M)}\le C$ which implies $\|w_{\sigma,\varepsilon}\|_{W^{2,p}(M)}\le C$.
Reconstructing $u_{\sigma,\varepsilon},v_{\sigma,\varepsilon}$ from $h_{\sigma,\varepsilon},w_{\sigma,\varepsilon}$, we also obtain $\|u_{\sigma,\varepsilon}\|_{W^{2,p}(M)}+\|v_{\sigma,\varepsilon}\|_{W^{2,p}(M)}\le C$ which finishes the proof.
 \end{proof}

We can use the Sobolev inequality and Schauder estimates to improve the above estimate to a $C^{2,\alpha}$ estimate.
More precisely, we obtain:

\begin{lemma}\label{prop:PDE estimate}
Suppose $u_{\sigma,\varepsilon}\in C^{2,\alpha}(M)$ and $v_{\sigma,\varepsilon}\in C^{2,\alpha}(M)$ solve system \eqref{eq:epsilon}, \eqref{eq:epsilon2}.
Then there exists a constant $C$ depending on $(M,g,k),c_\pm, d_\pm$, but independent of $\sigma, u_{\sigma,\varepsilon},v_{\sigma,\varepsilon},\varepsilon$ such that
\begin{align*}
    \|u_{\sigma,\varepsilon}\|_{C^{2,\alpha}(M)}+\|v_{\sigma,\varepsilon}\|_{C^{2,\alpha}(M)}\le C.
\end{align*}
\end{lemma}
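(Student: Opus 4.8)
The strategy is a standard bootstrapping argument that upgrades the uniform $W^{2,p}$ bound from the previous lemma to a uniform $C^{2,\alpha}$ bound, using Sobolev embedding to extract a H\"older bound on the first derivatives and then Schauder estimates applied to the system in a form with H\"older-continuous coefficients. First I would fix $p>3$, so that by the Sobolev embedding $W^{2,p}(M)\hookrightarrow C^{1,\beta}(M)$ for $\beta=1-3/p$, and conclude from the preceding lemma that $\|u_{\sigma,\varepsilon}\|_{C^{1,\beta}(M)}+\|v_{\sigma,\varepsilon}\|_{C^{1,\beta}(M)}\le C$ with $C$ independent of $\sigma,\varepsilon$.

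Next I would view each of the two equations in system \eqref{eq:epsilon} as a linear Poisson equation $\Delta u_{\sigma,\varepsilon}=f_{\sigma,\varepsilon}$, $\Delta v_{\sigma,\varepsilon}=g_{\sigma,\varepsilon}$, where the right-hand sides are the explicit expressions
\[
 f_{\sigma,\varepsilon}=-\sigma\tr_g(k)|\nabla u_{\sigma,\varepsilon}|+\frac{3|\nabla u_{\sigma,\varepsilon}||\nabla v_{\sigma,\varepsilon}|+\langle\nabla u_{\sigma,\varepsilon},\nabla v_{\sigma,\varepsilon}\rangle}{|u_{\sigma,\varepsilon}+v_{\sigma,\varepsilon}|+\varepsilon},
\]
and similarly for $g_{\sigma,\varepsilon}$. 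Since $u_{\sigma,\varepsilon},v_{\sigma,\varepsilon}\in C^{1,\beta}$ with uniform bounds, and since the denominator $|u_{\sigma,\varepsilon}+v_{\sigma,\varepsilon}|+\varepsilon$ is bounded below by a positive constant independent of $\varepsilon$ (indeed by $c_-+\sigma d_-+(1-\sigma)c_--\varepsilon$, which by the running assumption on $\varepsilon$ and Lemma \ref{mp lemma} stays bounded away from $0$), the functions $|\nabla u_{\sigma,\varepsilon}|$ and $|\nabla v_{\sigma,\varepsilon}|$ are H\"older continuous away from the zero set of the gradient; but in fact for the $C^{2,\alpha}$ estimate one only needs $f_{\sigma,\varepsilon},g_{\sigma,\varepsilon}\in C^{\alpha}$, and each is a smooth function of $(u_{\sigma,\varepsilon},v_{\sigma,\varepsilon},\nabla u_{\sigma,\varepsilon},\nabla v_{\sigma,\varepsilon})$ composed with the $C^{1,\beta}$ data, hence lies in $C^{\min(\alpha,\beta)}(M)$ with a uniform norm bound. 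Interior and boundary Schauder estimates for the Laplacian (the boundary data $c_\pm$, $\sigma d_\pm+(1-\sigma)c_\pm-\varepsilon$ being constants, hence trivially smooth, on the smooth boundary $\partial_\pm M$) then give $\|u_{\sigma,\varepsilon}\|_{C^{2,\alpha}(M)}+\|v_{\sigma,\varepsilon}\|_{C^{2,\alpha}(M)}\le C$ with $\alpha=\min(\alpha,\beta)$, for a constant $C$ depending only on $(M,g,k)$, $c_\pm$, $d_\pm$.

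The one point requiring care — and the place I would expect to spend the most attention — is the apparent non-smoothness of $|\nabla u|$ at points where $\nabla u=0$, which could in principle spoil the H\"older regularity of the right-hand side. However, this is not actually an obstacle here: the existence lemmas are being proved for the regularized system \eqref{eq:epsilon} whose solutions come from the continuity/degree argument, and at this stage one needs only the a priori estimate, which follows once $f_{\sigma,\varepsilon},g_{\sigma,\varepsilon}$ are seen to lie in $C^\alpha$; since $\xi\mapsto|\xi|$ is Lipschitz, the composition $x\mapsto|\nabla u_{\sigma,\varepsilon}(x)|$ is $C^\beta$ whenever $\nabla u_{\sigma,\varepsilon}$ is $C^\beta$, regardless of whether the gradient vanishes. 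Thus the Lipschitz-but-not-smooth character of the absolute value is harmless at the level of H\"older estimates, and the bootstrap closes. (The genuine difficulty of the gradient vanishing is deferred to the divergence-identity applications, where it is flagged in the remark following Theorem \ref{T:main}, and does not enter the PDE existence theory of this section.)
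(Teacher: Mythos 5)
Your proposal is correct and follows essentially the same route as the paper, which simply invokes the Sobolev embedding $W^{2,p}\hookrightarrow C^{1,\beta}$ and Schauder estimates to upgrade the uniform $W^{2,p}$ bound of the preceding lemma; your treatment of the Lipschitz composition $|\nabla u_{\sigma,\varepsilon}|$ and of the uniformly positive denominator supplies exactly the details the paper leaves implicit.
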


Having obtained uniform estimates for system \eqref{eq:epsilon} we will use Leray-Schauder's fixed point theorem below to obtain solutions of \eqref{eq:epsilon} for $\sigma=1$.
Passing to a limit $\varepsilon\to0$ then gives a solution to \eqref{eq:main} establishing Theorem \ref{T:PDE}.

 \begin{proof}[Proof of Theorem \ref{T:PDE}]
 Let $\phi_\pm^{\sigma,\varepsilon}$ be two smooth functions on $M$ with $\phi_\pm^{\sigma,\varepsilon}=c_\pm$, $\phi_\pm^{\sigma,\varepsilon}=\sigma d_\pm+(1-\sigma)c_\pm-\varepsilon$ on $\partial_\pm M$.
 We denote with $C^{2,\alpha}_0(M)$ the set $C^{2,\alpha}$-functions on $M$ which vanish on $\partial_\pm M$.
 Observe that $C^{2,\alpha}_0(M)$ is a Banach space.
 We define a family of maps $\mathcal F_{\sigma,\epsilon}:C^{2,\alpha}_0(M)\oplus C^{2,\alpha}_0(M)\to C^{2,\alpha}_0(M)\oplus C^{2,\alpha}_0(M)$ via
\begin{align*}
    \mathcal F_{\sigma,\varepsilon}(u,v)
  =    
 \left[\Delta_0^{-1}\left(\mathcal G^{\sigma,\varepsilon}_-(u,v)\right), \Delta_0^{-1}\left(\mathcal G^{\sigma,\varepsilon}_+(u,v)\right)                   \right]
\end{align*}
 where
\begin{align*}
    \mathcal G^{\sigma,\varepsilon}_\pm(u,v)=&\pm\sigma\tr_g(k)|\nabla (u+\phi_-)|\\
    &+\frac{3|\nabla (u+\phi_-)||\nabla (v+\phi_+)|+\langle \nabla (u+\phi_-),\nabla (v+\phi_+)\rangle}{|u+\phi_-+v+\phi_+|+\varepsilon}-\Delta \phi_\pm^{\sigma,\varepsilon}
\end{align*}
and where $\Delta^{-1}_0$ maps a function $f$ to the solution $\psi$ of $\Delta \psi=f$ on $M$ with vanishing Dirichlet boundary data.
By standard elliptic theory, $\mathcal F_{\sigma,\varepsilon}$ is indeed a map into $C^{2,\alpha}_0(M)\oplus C^{2,\alpha}_{0}(M)$.
Moreover, $\mathcal F_{\sigma,\varepsilon}$ is a compact operator.
More precisely, let $\{u_i,v_i\}$ be a bounded sequence in $C^{2,\alpha}_0(M)$.
Then $\mathcal G^{\sigma,\varepsilon}_\pm(u_i,v_i)$ are bounded sequences in $C^{0,\beta}_0(M)$ for any fixed $\beta\in(\alpha,1)$.
This implies that $\Delta_0^{-1}(\mathcal G^{\sigma,\varepsilon}_\pm(u_i,v_i))$ are bounded sequences in $C^{2,\beta}_0(M)$ which subsequentially converge in $C^{2,\alpha}_0(M)$ by Arzel\`a-Ascoli.

Observe that if $\mathcal F_{\sigma,\varepsilon}(u,v)=(u,v)$, then $(u+\phi_-,v+\phi_+)$ solve system \eqref{eq:epsilon}.
Next, we show that there is a solution $(u_{0,\varepsilon},v_{0,\varepsilon})$ to the equation $\mathcal F_{0,\varepsilon}(u_{0,\varepsilon},v_{0,\varepsilon})=(u_{0,\varepsilon},v_{0,\varepsilon})$.

Let $U_{0,\varepsilon}$ be the harmonic function with $U_{0,\varepsilon}=\frac1{c_\pm}$ on $\partial_\pm M$.
Then $u_{0,\varepsilon}=\frac1{U_{0,\varepsilon}}$ satisfies
\begin{align*}
    \Delta u_{0,\varepsilon}=2\frac{|\nabla u_{0,\varepsilon}|^2}{u_{0,\varepsilon}}
\end{align*}
with $u_{0,\varepsilon}=c_\pm $ on $\partial_\pm M$.
Next, let $v_{0,\varepsilon}=u_{0,\varepsilon}-\varepsilon$, and note that $v_{0,\varepsilon}=c_\pm-\varepsilon$ on $\partial_\pm M$ and $v_{0,\varepsilon}>0$ on $M$.
Moreover,
\begin{align*}
  \begin{split}
\Delta u_{0,\varepsilon}=&\frac{3|\nabla u_{0,\varepsilon}||\nabla v_{0,\varepsilon}|+\langle \nabla u_{0,\varepsilon},\nabla v_{0,\varepsilon}\rangle}{|u_{0,\varepsilon}+v_{0,\varepsilon}|+\varepsilon},\\
\Delta v_{0,\varepsilon}=&\frac{3|\nabla u_{0,\varepsilon}||\nabla v_{0,\varepsilon}|+\langle \nabla u_{0,\varepsilon},\nabla v_{0,\varepsilon}\rangle}{|u_{0,\varepsilon}+v_{0,\varepsilon}|+\varepsilon}.
\end{split}
\end{align*}

Hence, we can use the uniform estimates, Lemma \ref{prop:PDE estimate}, together with Leray-Schauder's fixed point theorem, see for instance Theorem 11.6 in \cite{GilbargTrudinger}, to deduce that there exists a solution of $\mathcal F_{1,\varepsilon}(u_{1,\varepsilon},v_{1,\varepsilon})=(u_{1,\varepsilon},v_{1,\varepsilon})$.
The functions $u_{1,\varepsilon},v_{1,\varepsilon}$ are uniformly bounded away from zero, and we have $C^{2,\alpha}$-estimates for $(u_{1,\varepsilon},v_{1,\varepsilon})$ which are uniform in terms of $\varepsilon$.
Consequently, we can take a subsequential limit $(u_{1,\varepsilon},v_{1,\varepsilon})$ as $\varepsilon\to0$ to obtain solutions $(u,v)\in C^{2,\gamma}$, $\gamma<\alpha$, to system \eqref{eq:main}.
 \end{proof}

A crucial ingredient of the above proof is that the system \eqref{system} takes a simpler form for $w=u-v$ and $h=\frac1{u+v}$ as in \eqref{eq:w} and \eqref{eq:h}.
We remark that for the $p$-harmonic system, i.e. system \eqref{system} for $a\in(0,1)$, can be rewritten in a very similar form to \eqref{eq:w} and \eqref{eq:h} though we have to re-define $h=(u+v)^{-\frac{1+a}{1-a}}$.
Note that the radial function $r^{-\frac{1+a}{1-a}}$ is $p$-harmonic in $\R^3$ for $p=2-a$.
We also expect that the solutions of system \eqref{system} for $a=0$ established in this section can be used to give a new proof for the spacetime PMT including a new positive lower bound for the mass.


\end{document}